\documentclass[11pt]{article}
\usepackage{tikz}
\usepackage{ucs}
\usepackage{amssymb}
\usepackage{amsthm}
\usepackage{amsmath}
\usepackage{latexsym}
\usepackage[cp1251]{inputenc}
\usepackage[english]{babel}
\usepackage{graphicx}
\usepackage{wrapfig}
\usepackage{caption}
\usepackage{subcaption}
\usepackage{txfonts}
\usepackage{lmodern}
\usepackage{mathrsfs}
\usepackage{algorithm}
\usepackage{multicol}
\usepackage{enumerate}
\usepackage{titlesec}
\usepackage{paralist}
\usepackage{mathtools}

\setcounter{secnumdepth}{4}

\titleformat{\paragraph}
{\normalfont\normalsize\bfseries}{\theparagraph}{1em}{}
\titlespacing*{\paragraph}
{0pt}{3.25ex plus 1ex minus .2ex}{1.5ex plus .2ex}

\newcommand*{\myproofname}{Proof}

\usepackage{wasysym}

\usepackage{fullpage}

\def\qed{\hfill\ifhmode\unskip\nobreak\fi\qquad\ifmmode\Box\else\hfill$\Box$\fi}

\title{Cubic graphs with small independence ratio }
\date{\today}
\author{
J\' ozsef Balogh~\thanks{Department of Mathematics, University of Illinois at Urbana--Champaign, IL, USA, jobal@illinois.edu. 
Research of this author is partially supported by NSF Grant DMS-1500121, Arnold O. Beckman
Research Award (UIUC Campus Research Board 15006) and by the Langan Scholar Fund
(UIUC).}
\and Alexandr Kostochka \thanks{Department of Mathematics, University of Illinois at Urbana--Champaign, IL, USA and
Sobolev Institute of Mathematics, Novosibirsk 630090, Russia, kostochk@math.uiuc.edu. Research of this author is supported in part by NSF grant
 DMS-1600592 and by grants 15-01-05867  and 16-01-00499 of the Russian Foundation for Basic Research.}
 \and Xujun Liu\thanks{Department of Mathematics, University of Illinois at Urbana--Champaign, IL, USA,  xliu150@illinois.edu.}
 }

\makeatletter
\newcommand{\neutralize}[1]{\expandafter\let\csname c@#1\endcsname\count@}
\makeatother

\newtheorem{theo}{Theorem}

\newtheorem{lemma}[theo]{Lemma}

\newtheorem{corl}[theo]{Corollary}

\newtheorem{claim}{Claim}[theo]

\theoremstyle{definition}
\newtheorem{defn}[theo]{Definition}

\newtheorem{remk}[theo]{Remark}

	\setlength{\columnseprule}{1pt}

	\def\quotient#1#2{%
		\raise1ex\hbox{$#1$}\Big/\lower1ex\hbox{$#2$}%
	}

	\renewcommand{\epsilon}{\varepsilon}

\begin{document}
	\maketitle
	
\begin{abstract}
Let $i(r,g)$ denote the infimum of the ratio $\frac{\alpha(G)}{|V(G)|}$
 over the $r$-regular graphs of girth at least $g$, where $\alpha(G)$ is the independence number of $G$,
  and  let $i(r,\infty) := \lim\limits_{g \to \infty} i(r,g)$. 
  Recently, several new lower bounds of $i(3,\infty)$ were obtained. In particular, Hoppen and Wormald showed in 2015 that $i(3, \infty) \ge 0.4375,$ and Cs\'oka improved it to $i(3,\infty) \ge 0.44533$ in 2016.
  Bollob\'as proved the upper bound  $i(3,\infty) < \frac{6}{13}$  in 1981,   and McKay improved it to $i(3,\infty) < 0.45537$   in 1987. 
There were no improvements since then.
 In this paper, we improve the upper bound to $i(3,\infty) \le 0.454.$
\\
\\
 {\small{\em Mathematics Subject Classification}: 05C15, 05C35}\\
 {\small{\em Key words and phrases}:  independence ratio, cubic graphs, independent sets.}
\end{abstract}

	\section{Introduction}	

A set $S$ of vertices in a graph $G$ is  {\em independent }   if no two vertices of $S$ are joined by an edge.
The  {\em independence number}, $\alpha(G)$, is the maximum size of an independent set in $G$. The 
{\em independence ratio}, $i(G)$, of a graph $G$ is the ratio $\frac{\alpha(G)}{|V(G)|}$. For positive integers $r$ and $g$, $i(r,g)$ denotes 
 the infimum of $i(G)$ over the $r$-regular graphs of girth at least $g$, and $i(r,\infty)$ denotes $\lim\limits_{g\to\infty} i(r,g)$.
The first interesting upper bounds on $i(r,\infty)$ were obtained by Bollob\' as~\cite{B1} in 1981. In particular, he proved
$i(3,\infty)<\frac{6}{13}$. Refining the method, McKay~\cite{MK1} in 1987 showed
\begin{theo}[McKay~\cite{MK1}]\label{McKay}
\begin{equation}\label{e2}
i(3,\infty)<0.45537.
\end{equation}
\end{theo}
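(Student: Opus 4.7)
The plan is to establish the upper bound by exhibiting, for each fixed $g$, a cubic graph of girth at least $g$ with independence ratio strictly below $0.45537$; these graphs are produced via the probabilistic method applied to the configuration (pairing) model of random cubic graphs. Concretely, I would sample a uniformly random $3$-regular multigraph on $n$ vertices by taking $3n$ stubs (three per vertex) and choosing a uniformly random perfect matching on them. For a fixed vertex subset $S$ of size $k$, the probability $p_k$ that no matching pair lies within the $3k$ stubs of $S$ is a ratio of double factorials with an explicit closed form.

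Let $X_k$ denote the number of independent sets of size $k$. Then $\mathbb{E}[X_k] = \binom{n}{k}\, p_k$; setting $k = \lfloor \alpha n \rfloor$ and applying Stirling's approximation gives $\mathbb{E}[X_k] = \exp\!\bigl(n\,\phi(\alpha) + o(n)\bigr)$ for an explicit entropy-like function $\phi$. Numerically solving $\phi(\alpha) = 0$ produces a threshold $\alpha^{\ast}$ such that $\mathbb{E}[X_k] \to 0$ for $\alpha > \alpha^{\ast}$, and Markov's inequality then shows that the random cubic multigraph has independence ratio less than $\alpha^{\ast}$ with high probability. To convert this into a simple graph of large girth, I would invoke the standard fact that in the pairing model the numbers of cycles of each bounded length are asymptotically independent Poisson variables with bounded means; hence, for each fixed $g$, the event that the sampled multigraph is simple and of girth at least $g$ has probability bounded away from zero, and conditioning on this event preserves the bound on $i(G)$. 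Letting $g \to \infty$ then yields $i(3,\infty) \le \alpha^{\ast}$.

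The main obstacle is sharpening $\alpha^{\ast}$ from the crude $6/13$ that the plain first moment yields (which is essentially Bollob\'as's original argument) down to a value below $0.45537$. The refinement -- which is where McKay's improvement lives -- has to replace $X_k$ by a more rigid statistic so that the exponential rate $\phi$ drops. A natural choice is a weighted count of maximum (or maximal) independent sets, with weights reflecting the local structure of the vertex boundary so that the common overcounts present in the naive $X_k$ are suppressed. Designing a weighting scheme whose expectation remains computable in the pairing model while pushing the critical exponent below zero at $\alpha = 0.45537$ is the delicate step; the subsequent verification is a careful but routine numerical optimization.
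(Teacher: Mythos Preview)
The paper does not prove this theorem: it is quoted from McKay's 1987 paper and later invoked (in the sharper quantitative form of Theorem~\ref{MK1}) as a black-box input to the main argument. There is therefore no in-paper proof to compare your proposal against.

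On its own terms, your outline is accurate in its scaffolding. The pairing model, the first-moment bound $\mathbb{E}[X_k]=\binom{n}{k}\,p_k$, the Stirling asymptotics giving an exponential rate, and the passage to simple graphs of prescribed girth via the Poisson short-cycle statistics (packaged here as Theorem~\ref{BW} and Corollary~\ref{MK2}) are exactly the ambient machinery. You are also right that the gain over the crude first moment lives in replacing $X_k$ by a weighted count whose total weight is still at least~$1$ on every graph with $\alpha(G)=k$; the paper says as much when it credits ``the idea of the weight (used by McKay)'' in Section~4.

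Where your proposal stops short of a proof is the weighting itself, which you leave as ``weights reflecting the local structure of the vertex boundary.'' McKay's device is specific, and without it nothing forces the threshold below the na\"{\i}ve value. The structural observation is that if $S$ is a \emph{maximum} independent set in a cubic graph $G$, then every vertex of $V(G)\setminus S$ has a neighbor in $S$, so $G[V(G)\setminus S]$ has maximum degree at most~$2$ and is a disjoint union of paths and cycles. One then enumerates configurations carrying such a pair $(S,\,G[V(G)\setminus S])$, and the overcount is controlled via the quantity $I(2,g)$ --- the minimum exponential growth rate of the number of independent sets in max-degree-$2$ graphs of girth at least~$g$ --- which McKay computes exactly and which the present paper records as Lemma~\ref{GR} and Remark~\ref{rm8} (with $I(2,g)\to(1+\sqrt5)/2$ as $g\to\infty$). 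It is this $\Delta\le 2$ observation together with the explicit value of $I(2,g)$ that drives the threshold to $0.45537$; your sketch correctly locates where the work lies but does not supply this mechanism.
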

In the next 30 years, there were no improvements of Theorem~\ref{e2}, but recently some interesting lower bounds on
$i(r,\infty)$ and in particular on $i(3,\infty)$ were proved. Hoppen~\cite{H1} showed $i(3,\infty)\geq 0.4328$. Then
Kardo\v s, Kr\' al and Volec~\cite{KKV1} improved the bound to $0.4352$. Cs\' oka, Gerencs\' er, Harangi, and Vir\' ag~\cite{CGHV1} pushed the bound to
$0.4361$ and Hoppen and Wormald~\cite{HW1} --- to $0.4375$. Moreover, Cs\' oka et al~\cite{CGHV1} claimed a computer
assisted lower bound $i(3,\infty)\geq 0.438$, and Cs\' oka~\cite{C1} later improved the bound to 0.44533. The lower bound of $i(3, \infty)$ was also studied in~\cite{HS1},~\cite{LW1} and~\cite{S1}.
Our result is an improvement of~\eqref{e2}
to $i(3,\infty) \le 0.454$. The improvement is  small, but it decreases the gap between the upper and lower bounds on
$i(3,\infty)$ by approximately $14\%$.

\begin{theo}\label{main theorem}
$i(3,\infty) \le 0.454.$
\end{theo}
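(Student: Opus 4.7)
The plan is to prove the bound by analyzing a random $3$-regular graph $G_n$ on $n$ labeled vertices sampled from the configuration model (a uniform perfect matching on $3n$ half-edges), showing that with high probability $\alpha(G_n) \le 0.454\,n$, and then passing from random cubic multigraphs to simple cubic graphs of arbitrary girth by a standard deletion/switching argument. Since the expected number of cycles of length at most $g$ in the configuration model is $O(1)$ for every fixed $g$, with positive probability there are few short cycles; switching or deleting them alters the independence number by $o(n)$. Letting $n\to\infty$ then gives $i(3,g)\le 0.454$ for every $g$, and hence $i(3,\infty)\le 0.454$.

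The main work lies in bounding $\alpha(G_n)$, and for this I would use a refined first-moment argument. The naive first moment, which merely counts independent sets of size $k = \alpha n$ in $G_n$, only reproves Bollob\'as's threshold $6/13 \approx 0.4615$. To improve, I would weight each independent set $I$ by a structural invariant recording the induced degree profile on its complement: partition $V(G_n)\setminus I = B_0 \cup B_1 \cup B_2 \cup B_3$ according to $|N(v)\cap I|$, subject to the linear constraints
\begin{equation*}
|B_0|+|B_1|+|B_2|+|B_3| = n-k, \qquad |B_1|+2|B_2|+3|B_3| = 3k.
\end{equation*}
The expected number of triples $(I,(B_j))$ admits an exact configuration-model formula, and its exponential rate can be optimized in closed form over the profile $(\beta_0,\beta_1,\beta_2,\beta_3) = (|B_j|/n)$. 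To push below McKay's $0.45537$ and down to $0.454$, I would further restrict the count to ``locally maximal'' independent sets (forcing $B_0 = \emptyset$) and additionally impose a swap-stability condition between $I$ and $B_1$ that rules out certain short augmenting structures. Each maximum independent set is still counted at least once, but the total expected count decays once $\alpha > 0.454$.

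The main obstacle will be designing the auxiliary structure so that (i) the resulting rate function, after Laplace/saddle-point analysis, is strictly negative at $\alpha = 0.454$, and (ii) every maximum independent set of $G_n$ certifiably admits at least one witness of the prescribed form, so that the refined count genuinely upper bounds $\Pr[\alpha(G_n)\ge 0.454\,n]$. Balancing these competing demands is delicate, and is likely the reason the improvement over McKay is modest in magnitude while still nontrivial. Once the refined first moment is shown to vanish at $\alpha = 0.454$, the theorem follows by Markov's inequality combined with the girth-clearing step above.
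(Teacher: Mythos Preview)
Your high-level framework is right---configuration model, refined first moment, then pass to high girth---but the specific refinement you propose does not reach $0.454$, and the missing ingredient is precisely what the paper supplies. Recording the degree profile $(|B_0|,|B_1|,|B_2|,|B_3|)$ of the complement and imposing local maximality ($B_0=\emptyset$) is already implicit in McKay's argument; optimizing over that profile is exactly how one arrives at his $0.45537$, not at anything smaller. Your further ``swap-stability condition between $I$ and $B_1$'' is left undefined, so there is no way to verify either that every maximum independent set carries a witness or that the rate function drops below zero at $\alpha=0.454$. As written, the proposal is a plan to reprove McKay's bound plus a hope that an unspecified extra constraint closes the remaining gap.

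The paper's actual device is different in kind. Rather than weighting independent sets, it counts \emph{MAI sets}: sets $A$ with $\Delta(G[A])\le 1$ that contain a maximum independent set and are inclusion-maximal with those properties. One then proves that $B=V\setminus A$ is also an AI set, that the edges between the matched parts $Y\subseteq A$ and $Z\subseteq B$ form a matching $J$ of some size $j$, and---crucially---that swapping along any independent subset of an auxiliary degree-$\le 2$ graph $H$ on $J$ produces another MAI set. This yields at least $I(H)\ge 1.618^{\,j}$ distinct MAI sets, so the weight $1.618^{-j}$ is legitimate. The factor $1.618^{-j}$, combined with the constraints forced by the MAI structure (in particular $|B|\ge |A|$ and the $Y$--$Z$ matching), is what pushes the optimized rate below $1$ on the narrow window $0.454<\chi\le 0.45537$; McKay's theorem is then invoked as a black box for $\chi>0.45537$. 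None of this structure appears in your sketch. A minor separate point: your girth step via deletion/switching is unnecessary and slightly awkward (deletion destroys $3$-regularity); the paper simply quotes the Bollob\'as--Wormald theorem that a fixed positive fraction of pairings already have girth $\ge g$, so an a.a.s.\ property of pairings transfers directly.
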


The proof uses the  language of configurations introduced by Bollob\' as~\cite{B2}, and shows that ``many'' 3-regular configurations have ``small''
independence ratio. The proof of our improvement is based on analyzing the presence not of  largest independent sets, but of larger structures, so called MAI-sets (defined in
Section 3) that
contain largest independent sets.

 
\section{Preliminaries}\label{pr}
\subsection{Notation}
We mostly use standard notation.
 The complete $n$-vertex graph is denoted by $K_n$.
  If $G$ is a multigraph and $v,u \in V(G)$, then $E_G(v, u)$ denotes the set of all edges in $G$ connecting  $v$ and $u$,
 $e_G(v, u) \coloneqq |E_G(v, u)|$, and $\deg_G(v)\coloneqq\sum_{u\in V(G)\setminus \{v\}} e_G(v,u)$. 
 By $\Delta(G)$ we denote the maximum degree of $G$,  and by $g(G)$ --- the {\em girth} (the length of a shortest
 cycle) of $G$.
For $A \subseteq V(G)$, $G[A]$ denotes the submultigraph of $G$ induced by $A$.
For $k \in \mathbb{Z}_{> 0}$, $[k]$ denotes the set $\{1, \ldots, k\}$.


\subsection{The Configuration Model}
The configuration model  in different versions is due to Bender and Canfield~\cite{BC1} and Bollob\'as~\cite{B2}. 
Our work is based on the version of Bollob\'as. 
Let $n$ be an even positive integer and
 $V_n=[n]$.  Consider the Cartesian product $W_n=V_n \times [3]$. 
A {\em configuration/pairing} (of order $n$ and degree $3$) is 
a perfect matching on the vertex set $W_n$. There are 
$(3n-1)\cdot (3n-3)\cdot \ldots \cdot 1=(3n-1)!!$
such matchings.

Let $\mathcal{F}_3(n)$  denote the collection of all $(3n-1)!!$ possible pairings on $W_n$. 
 We project each pairing $F \in \mathcal{F}_3(n)$ to a multigraph $\pi(F)$ on the vertex set $V_n$ by ignoring the second coordinate. 
Then $\pi(F)$ is a $3$-regular multigraph (which may or may not contain loops and/or multiple edges). Let $\pi(\mathcal{F}_3(n))=\{\pi(F)\,:\, F\in\mathcal{F}_3(n)\}$ 
be the set of $3$-regular multigraphs on $V_n$. By definition,
\begin{equation}\label{0311}
\mbox{\em each simple graph $G\in \pi(\mathcal{F}_3(n))$ corresponds to  $(3!)^n$ distinct pairings in $\mathcal{F}_3(n)$.} 
\end{equation}
We will call the elements of $V_n$ - {\em vertices}, and of $W_n$ - {\em points}.

\begin{defn}
Let $\mathcal{G}_g(n)$ be the set of all cubic graphs with vertex set $V_n=[n]$ and girth at least $g$ and 
$\mathcal{G}'_g(n)=\{F\in \mathcal{F}_3(n)\,:\, \pi(F)\in \mathcal{G}_g(n)\}$.
\end{defn}

We will heavily use the following result: 
\begin{theo}[Wormald~\cite{W1}, Bollob\'as~\cite{B2}]\label{BW}
For each fixed $g \geq 3$, 
\begin{equation}\label{03112}
\lim\limits_{n \to \infty}  \frac{|\mathcal{G}'_g(n)|}{|\mathcal{F}_3(n)|} = \exp \left \{ -\sum\limits_{k=1}^{g-1}\frac{2^{k-1}}{k} \right \}.
\end{equation}

\end{theo}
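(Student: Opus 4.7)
The plan is to interpret the ratio as a probability under the uniform distribution on $\mathcal{F}_3(n)$ and then apply the Poisson paradigm via the method of moments. Let $\mathbf{P}$ denote the uniform measure on $\mathcal{F}_3(n)$, and for $k\ge 1$ let $X_k = X_k(F)$ be the number of cycles of length $k$ in the multigraph $\pi(F)$, where loops count as $1$-cycles and pairs of parallel edges as $2$-cycles. A configuration $F$ projects into $\mathcal{G}_g(n)$ iff $\pi(F)$ has no loops, no multiple edges, and no cycles of length less than $g$, so
\begin{equation*}
\frac{|\mathcal{G}'_g(n)|}{|\mathcal{F}_3(n)|} \;=\; \mathbf{P}\!\left(X_1 = X_2 = \cdots = X_{g-1} = 0\right).
\end{equation*}
The goal is therefore to show that $(X_1,\dots,X_{g-1})$ converges in distribution to a vector of independent Poisson random variables with parameters $\lambda_k = 2^{k-1}/k$ as $n\to\infty$ with $g$ fixed.

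I would first compute $\mathbf{E}[X_k]$ directly. An (unordered, undirected) $k$-cycle in a configuration is specified by an ordered $k$-tuple of distinct vertices, an ordered pair of points at each vertex (giving the cycle's two cycle-edges at that vertex), and then dividing by $2k$ for the rotational and reflective symmetries. This yields $\tfrac{1}{2k}\cdot \tfrac{n!}{(n-k)!}\cdot 6^k$ potential $k$-cycles, and given such a cycle the number of pairings extending it is $(3n-2k-1)!!$. Hence
\begin{equation*}
\mathbf{E}[X_k] \;=\; \frac{1}{2k}\cdot\frac{n!}{(n-k)!}\cdot 6^k\cdot \frac{(3n-2k-1)!!}{(3n-1)!!} \;\longrightarrow\; \frac{1}{2k}\cdot\frac{6^k}{3^k} \;=\; \frac{2^{k-1}}{k} \;=\; \lambda_k.
\end{equation*}

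Next I would compute the joint factorial moments. For non-negative integers $m_1,\dots,m_{g-1}$, the expectation $\mathbf{E}\!\left[\prod_{k=1}^{g-1}(X_k)_{m_k}\right]$ counts ordered tuples of pairwise distinct short cycles (with $m_k$ cycles of length $k$) weighted by the fraction of pairings containing them. Split the count into the contribution from \emph{vertex-disjoint} tuples and the rest. In the disjoint case, multiplying the per-cycle counts and the extension factor $(3n - 2\sum k m_k - 1)!!/(3n-1)!!$ and taking $n\to\infty$ gives $\prod_{k=1}^{g-1}\lambda_k^{m_k}$, exactly the joint factorial moments of independent Poissons with parameters $\lambda_k$. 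Tuples in which two of the chosen short cycles share at least one vertex use strictly fewer than $\sum k m_k$ vertices, so the corresponding count is smaller by a factor of at least $1/n$, and these contributions vanish in the limit. Since Poisson distributions are determined by their factorial moments, the method of moments yields the claimed joint convergence, and in particular
\begin{equation*}
\mathbf{P}(X_1 = \cdots = X_{g-1} = 0) \;\longrightarrow\; \prod_{k=1}^{g-1} e^{-\lambda_k} \;=\; \exp\!\left(-\sum_{k=1}^{g-1}\frac{2^{k-1}}{k}\right).
\end{equation*}

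The main technical obstacle is the overlap estimate in the joint-moment step: one must bound the number of ordered tuples of distinct short cycles that fail to be vertex-disjoint and show that, after dividing by $|\mathcal{F}_3(n)|$, their contribution is $o(1)$ uniformly in the fixed parameters $m_1,\dots,m_{g-1}$. This is done by classifying overlaps according to the "excess" (number of vertices saved by merging) and observing that each unit of excess saves a factor of $n$ while costing only $O(1)$ additional combinatorial choices for the fixed values of $g,k,m_k$. Once this bookkeeping is in place, the rest is a standard application of the Poisson method of moments and the elementary asymptotics $(3n-2j-1)!!/(3n-1)!! \sim (3n)^{-j}$.
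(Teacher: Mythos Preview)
Your proof sketch is correct and is essentially the classical argument due to Bollob\'as~\cite{B2} (and, in a different guise, Wormald~\cite{W1}): interpret the ratio as $\mathbf{P}(X_1=\cdots=X_{g-1}=0)$ for the short-cycle counts $X_k$, compute the joint factorial moments, and show they converge to those of independent Poissons with means $\lambda_k=2^{k-1}/k$.

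Note, however, that the paper itself does \emph{not} prove Theorem~\ref{BW}; it is quoted from the literature and used as a black box (to pass from statements about random pairings to statements about random cubic graphs of large girth, via Corollary~\ref{MK2}). So there is no ``paper's own proof'' to compare against. Your write-up is the standard proof of the cited result, and the expectation computation and the overlap/excess bookkeeping you outline are exactly what is needed. If you want to tighten it, the only place that deserves a little more care is the treatment of $k=1$ and $k=2$ in the cycle-counting formula (loops and parallel edges have slightly different symmetry groups than longer cycles), but your formula $\tfrac{1}{2k}\cdot\tfrac{n!}{(n-k)!}\cdot 6^k$ does give the right count in those cases as well.
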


{\bf Remark.}
When we say that {\em a pairing $F$ has a multigraph property $\mathcal{A}$}, we  mean that $\pi(F)$
has  property $\mathcal{A}$. 

Since dealing with pairings is simpler than working with labeled simple regular graphs, we need the following 
well-known consequence of Theorem~\ref{BW}.

\begin{corl}[\cite{MK1}(Corollary 1.1),~\cite{JLR}(Theorem 9.5)
]\label{MK2}
 For fixed $g \geq 3$, any property that holds for $\pi(F)$ for almost all pairings  $F\in \mathcal{F}_3(n)$
 also holds for almost all graphs in $\mathcal{G}_g(n)$. 
\end{corl}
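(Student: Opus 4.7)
The plan is to deduce the transfer principle directly from Theorem \ref{BW} combined with the exact count \eqref{0311}. Fix $g \geq 3$, let $\mathcal{A}$ be a multigraph property holding for $\pi(F)$ for almost all $F \in \mathcal{F}_3(n)$, and set
$$\mathcal{B}_n = \{F \in \mathcal{F}_3(n) : \pi(F) \text{ does not satisfy } \mathcal{A}\},$$
so that by hypothesis $|\mathcal{B}_n|/|\mathcal{F}_3(n)| \to 0$. Let $\mathcal{G}^{\mathrm{bad}}_g(n) \subseteq \mathcal{G}_g(n)$ denote the graphs in $\mathcal{G}_g(n)$ that fail $\mathcal{A}$. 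The goal is to prove that $|\mathcal{G}^{\mathrm{bad}}_g(n)|/|\mathcal{G}_g(n)| \to 0$.

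The first step will be to exploit the $6^n$-to-$1$ correspondence of \eqref{0311}. Because $g \geq 3$, every graph in $\mathcal{G}_g(n)$ is simple (a loop would be a cycle of length $1$ and a multi-edge would give a cycle of length $2$), so each such graph is the projection of exactly $(3!)^n = 6^n$ distinct pairings of $\mathcal{G}'_g(n)$. In particular, $|\mathcal{G}'_g(n)| = 6^n\,|\mathcal{G}_g(n)|$, and the $6^n$ pairings lying above any graph in $\mathcal{G}^{\mathrm{bad}}_g(n)$ all belong to $\mathcal{B}_n \cap \mathcal{G}'_g(n) \subseteq \mathcal{B}_n$. This immediately yields
$$\frac{|\mathcal{G}^{\mathrm{bad}}_g(n)|}{|\mathcal{G}_g(n)|} \;=\; \frac{6^n\,|\mathcal{G}^{\mathrm{bad}}_g(n)|}{|\mathcal{G}'_g(n)|} \;\leq\; \frac{|\mathcal{B}_n|}{|\mathcal{G}'_g(n)|}.$$

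The second step is to convert the denominator on the right back to $|\mathcal{F}_3(n)|$ by invoking Theorem \ref{BW}. With $c_g := \exp\bigl\{-\sum_{k=1}^{g-1} 2^{k-1}/k\bigr\} > 0$, that theorem gives $|\mathcal{G}'_g(n)| \geq (c_g/2)\,|\mathcal{F}_3(n)|$ for all sufficiently large $n$, so
$$\frac{|\mathcal{G}^{\mathrm{bad}}_g(n)|}{|\mathcal{G}_g(n)|} \;\leq\; \frac{2}{c_g} \cdot \frac{|\mathcal{B}_n|}{|\mathcal{F}_3(n)|},$$
and since $g$ is fixed the factor $2/c_g$ is a constant, so the right-hand side tends to $0$ by hypothesis.

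The whole argument is essentially a one-line inequality chain once Theorem \ref{BW} is granted, so there is no serious obstacle. The only subtlety, and the reason the hypothesis $g \geq 3$ is needed, is that the clean count $6^n$ in \eqref{0311} is valid only for \emph{simple} target graphs (it counts the independent relabelings of the three points attached to each vertex), and girth at least $3$ is exactly what guarantees simplicity. No union bound, conditioning, or independence argument among the pairings is required; the transfer is purely combinatorial, driven by the fiber size $6^n$ being the same for every simple graph.
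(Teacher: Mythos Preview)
The paper does not give its own proof of Corollary~\ref{MK2}; it simply cites the result from McKay~\cite{MK1} and Janson--\L uczak--Ruci\'nski~\cite{JLR}. Your argument is correct and is exactly the standard derivation that appears in those references: combine the uniform $6^n$-to-$1$ fiber count from~\eqref{0311} (valid because $g\geq 3$ forces simplicity) with the positive lower bound on $|\mathcal{G}'_g(n)|/|\mathcal{F}_3(n)|$ furnished by Theorem~\ref{BW}.
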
 

\begin{defn}

For a graph $G$, let $I(G)$ denote the total number of all independent sets in $G$, including the empty set. 
For all integer $r \ge 0$, $g \ge 3$, we define $I(r,g) = \inf I(G)^{1/|V(G)|}$, where the infimum is over all graphs $G$ of maximum degree at most $r$ and girth at least $g$.

\end{defn}

Recall that the {\em Fibonacci numbers} $F_n$ 
 are defined by  $F_1 = F_2 = 1,$ and $F_i = F_{i-1} + F_{i-2}$, for $i \ge 3.$ The exact formula for $F_i$ is 
$$F_i = \frac{\varphi^i - \psi^i}{\sqrt{5}},$$
where $i \ge 0$, $\varphi = \frac{1+\sqrt{5}}{2}$, and $\psi = \frac{1-\sqrt{5}}{2}$.

\begin{lemma}[McKay~\cite{MK1}]\label{GR}
For any $g \ge 4,$ $I(2,g) = (F_{s-1} + F_{s+1})^{\frac{1}{s}}$, where $s = 2 \lfloor g/2 \rfloor + 1.$ 

\end{lemma}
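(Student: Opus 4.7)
The plan is to exploit that any graph $G$ with $\Delta(G)\le 2$ and $g(G)\ge g$ is a disjoint union of paths $P_{n_1},\ldots,P_{n_k}$ and cycles $C_{m_1},\ldots,C_{m_\ell}$ with each $m_j\ge g$. Since independent sets in disjoint components can be chosen independently, $I(G)=\prod_i I(P_{n_i})\cdot \prod_j I(C_{m_j})$, so $I(G)^{1/|V(G)|}$ is a weighted geometric mean of the per-component quantities $I(P_n)^{1/n}$ and $I(C_m)^{1/m}$. Hence it suffices to bound each such quantity from below by $L_s^{1/s}$, where $L_k:=F_{k-1}+F_{k+1}$ is the $k$-th Lucas number, and to exhibit equality at $G=C_s$ (which is valid because $s\ge g$).

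Standard recurrences for independence polynomials of paths and cycles give $I(P_n)=F_{n+2}$ and $I(C_m)=L_m=\varphi^m+\psi^m$ (the latter via the transfer matrix with eigenvalues $\varphi,\psi$, or directly from $L_m=L_{m-1}+L_{m-2}$). Because $|\psi|<1<\varphi$, the Lucas ratios split by parity: for even $m$, $L_m>\varphi^m$, so $L_m^{1/m}>\varphi$; for odd $m$, $L_m=\varphi^m-|\psi|^m<\varphi^m$, so $L_m^{1/m}<\varphi$. Since $s=2\lfloor g/2\rfloor+1$ is odd, this already gives $L_s^{1/s}<\varphi$. A one-line induction shows $F_{n+2}\ge \varphi^{n}$ for all $n\ge 1$, so every path component contributes $F_{n+2}^{1/n}\ge \varphi>L_s^{1/s}$, and every cycle of even length $\ge g$ contributes $L_m^{1/m}>\varphi>L_s^{1/s}$.

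The main remaining step is the monotonicity $L_m^{1/m}\ge L_s^{1/s}$ for every odd $m\ge s$. Setting $r:=|\psi|/\varphi\in(0,1)$, for odd $m$ we have $L_m^{1/m}=\varphi(1-r^m)^{1/m}$, so this reduces to showing that $h(t):=(1-r^t)^{1/t}$ is strictly increasing on $(0,\infty)$. Differentiating $\log h$ yields
\[
t^2(\log h)'(t)\;=\;\frac{-t\,r^t\ln r}{1-r^t}\;-\;\log(1-r^t),
\]
and both summands are positive because $\ln r<0$ and $\log(1-r^t)<0$. Combining this monotonicity with the observations of the previous paragraph gives $I(G)^{1/|V(G)|}\ge L_s^{1/s}=(F_{s-1}+F_{s+1})^{1/s}$, and equality is achieved by (a disjoint union of copies of) $C_s$, yielding $I(2,g)=(F_{s-1}+F_{s+1})^{1/s}$.

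The main obstacle is the monotonicity claim for odd $m$; the rest reduces to Binet's formula and a component decomposition. If a purely combinatorial proof of the monotonicity is preferred over calculus, one can alternatively verify $L_{m+2}^m\ge L_m^{m+2}$ directly using $L_{m+2}=3L_m-L_{m-2}$ and the bound $L_m<\varphi^m$ for odd $m$.
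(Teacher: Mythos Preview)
The paper does not supply its own proof of this lemma; it is quoted from McKay~\cite{MK1}, and only the consequence in Remark~\ref{rm8} is worked out. Your proposal is a complete and correct proof.

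Your argument is the natural one: decompose a graph with $\Delta\le 2$ into path and cycle components, use multiplicativity of $I(\cdot)$, identify $I(P_n)=F_{n+2}$ and $I(C_m)=L_m$, and then reduce the lower bound to the monotonicity of $\varphi(1-r^m)^{1/m}$ in $m$ for $r=|\psi|/\varphi=\varphi^{-2}$. This last step is exactly the fact the paper later invokes without proof in Remark~\ref{rm8}, where it writes $I(2,g)=\varphi(1-\varphi^{-2s})^{1/s}$ and asserts that $(1-\varphi^{-2s})^{1/s}$ is increasing in $s$; your derivative computation supplies the missing justification. The verification that every odd $m\ge g$ satisfies $m\ge s$ (since $s$ is the least odd integer $\ge g$) and that $C_s$ is admissible (since $s\ge g$) are both correctly handled. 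One cosmetic point: you mix $\log$ and $\ln$ in the derivative display; pick one.
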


\begin{remk}\label{rm8}
The numbers $s-1$ and $s+1$ in Lemma~\ref{GR}   are even. Therefore, 
$$I(2,g) = (F_{s-1} + F_{s+1})^{\frac{1}{s}} = \left(\frac{\varphi^{s-1} + \varphi^{s+1}-\varphi^{1-s} - \varphi^{-s-1}}{\sqrt{5}}\right)^{\frac{1}{s}}$$
$$ = \varphi \cdot \left((1-\varphi^{-2s})\frac{\varphi^{-1}+\varphi}{\sqrt{5}}\right)^{\frac{1}{s}} = \varphi (1-\varphi^{-2s})^{1/s}.$$ 
Since the function $(1-\varphi^{-2s})^{1/s}$ monotonically increases for $s\geq 1$, and 
$\varphi (1-\varphi^{-18})^{1/9}\geq 1.618002$,
we conclude that 
 for each graph $H$ with maximum degree at most $2$ and girth at least  $ 8,$
\begin{equation}\label{1.618}
1.618 \le I(2,8) \le I(H)^{1/|V(H)|} .
\end{equation}
\end{remk}

\section{MAI sets in cubic graphs}
\begin{defn}
A vertex set $A$ in a graph $G$ is  {\em an AI set} (an almost independent  set), if every component of $G[A]$ is an edge or an isolated vertex.
In other words, $A$ is  {\em an AI set} if $\Delta(G[A])\leq 1$.
\end{defn}

\begin{defn}
A vertex set $A$ is a {\em maximum almost independent  set (MAI set)} in a graph $G$ if all of the following hold:
\begin{itemize}
\item[M1.] $A$ is an AI set;

\item[M2.]  $A$ contains  an independent set $A'$ of size $\alpha(G)$;

\item[M3.]  $A$ is  largest among all sets  satisfying M1 and M2.
\end{itemize}
\end{defn}

Let $G\in \mathcal{G}_{16}(n)$ and $A$ be a MAI set. Denote $B=V(G)-A$.

\begin{lemma}\label{B1}
 $B$ is an AI set.
\end{lemma}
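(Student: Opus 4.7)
The plan is to argue by contradiction: suppose some $v \in B$ has at least two neighbors in $B$, so (since $G$ is cubic) $v$ has exactly one neighbor $u$ in $A$. As $v \notin A'$ and $A'$ is a maximum independent set, $v$ must have a neighbor in $A'$; this neighbor has to be $u$, so $u \in A'$. I would then split on $\deg_{G[A]}(u) \in \{0,1\}$.

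If $\deg_{G[A]}(u)=0$, then in $A\cup\{v\}$ only $u$ and $v$ change their degrees in the induced subgraph, each reaching induced degree exactly $1$, so $A\cup\{v\}$ is AI; it still contains $A'$ and has size $|A|+1$, directly contradicting M3.

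If $\deg_{G[A]}(u)=1$, let $m$ be the unique $A$-neighbor of $u$. Because $u\in A'$ and $A'$ is independent, $m\in A\setminus A'$, and since $A$ is AI, $u$ is the only $A$-neighbor of $m$, so $m$'s two remaining $G$-neighbors lie in $B$. In particular $u$ is the only $A'$-neighbor of $m$, and $m\not\sim v$ (as $v$'s unique $A$-neighbor is $u\neq m$). I would then exhibit the set $S := (A'\setminus\{u\}) \cup \{v,m\}$ and check its independence: $A'\setminus\{u\}$ is a subset of the independent set $A'$; both $v$ and $m$ had only $u$ as a neighbor in $A'$, and $u$ has been removed, so neither has a neighbor in $A'\setminus\{u\}$; and $v\not\sim m$. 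Therefore $S$ is an independent set of size $|A'|+1$, contradicting the maximality of $A'$.

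The argument uses only the definitions of independent set, maximum independent set, AI set, and MAI; notably, the girth hypothesis $g(G)\ge 16$ is not needed for this particular lemma. The only subtle points are the two structural identifications ($u\in A'$ and $u$ being $m$'s only $A'$-neighbor), both of which follow immediately from the hypotheses, so I do not foresee any real obstacle in carrying out the proof.
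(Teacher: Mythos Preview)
Your proof is correct and follows essentially the same strategy as the paper's (contradiction from $\deg_{G[B]}(v)\geq 2$, then a case split on $\deg_{G[A]}(u)$); the only differences are that the paper treats $\deg_{G[B]}(v)=3$ explicitly rather than folding it into the ``$v$ must have an $A'$-neighbor'' step, and in the $\deg_{G[A]}(u)=1$ subcase the paper first swaps $A'$ for a maximum independent set not containing $u$ and then adds $v$, whereas you remove $u$ and add both $v$ and $m$. One small wording issue: your opening sentence asserts ``exactly one neighbor $u$ in $A$'' before this is justified---cubic plus ``at least two in $B$'' gives only \emph{at most} one in $A$, and the ``at least one'' part only follows from your next observation that $v$ has a neighbor in $A'\subseteq A$.
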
	

\noindent \noindent  {\em Proof.} Let $b \in B$. We prove that $d_{G[B]}(b)\leq 1$. Let $A'$ be a maximum independent set in $A$.

 If $d_{G[B]}(b)=3$, then there is no edge from $b$ to $A$,  and  $A' \cup \{b\}$ is an independent set in $G$ with size 
$|A'| + 1 =\alpha(G)+1$,  contradicting the definition of  $\alpha(G)$.

 If $d_{G[B]}(b)=2$, then there is only one edge $e$ from $b$ to $A$, say $ba$. 
 If $d_{G[A]}(a)=0$, then  $G[A \cup \{b\}]$ is an AI set in $G$ larger than $A$ containing $A'$. This contradicts the fact that $A$ is a MAI set.
  If $d_{G[A]}(a)=1$, then without loss of generality, we may assume $a \in A-A'$. Then $b$ has no neighbors in $A'$,
  and $A' \cup \{b\}$ is an independent set in $G$ with size $|A'| + 1 $, again contradicting the definition of  $\alpha(G)$.\qed


\medskip
Let $A$ be a MAI set in $G \in G_{16}(n)$. Denote the set of vertices with degree $1$ in $G[A]$ by $Y$, the set of vertices with degree $1$ in $G[B]$ by $Z$. We introduce notation for the sizes of the sets: Let $x:=|A'|$, $s:=|Y|/2$, $t:=|Z|/2$, and $i:=\frac{n}{2}-|A|.$ Then $|A|= \frac{n}{2}-i$ and $|B|=\frac{n}{2}+i$.

\begin{lemma}\label{its}
$i \ge 0$ and $t \ge s$.
\end{lemma}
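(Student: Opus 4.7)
The plan is to derive two relations between $i$, $s$, and $t$ and combine them. The first relation comes from double-counting the edges between $A$ and $B$ (call this set $E(A,B)$). Since $G$ is 3-regular, every vertex of $A \setminus Y$ sends $3$ edges to $B$, and every vertex of $Y$ sends $2$ edges to $B$, so $|E(A,B)| = 3(|A|-2s)+2(2s) = 3|A|-2s$. By Lemma~\ref{B1}, $B$ is also an AI set, and the same counting from the $B$ side gives $|E(A,B)| = 3|B|-2t$. Equating these, using $|A|=n/2-i$ and $|B|=n/2+i$, gives
\[
3(|A|-|B|) = 2(s-t) \quad\Longrightarrow\quad t-s = 3i.
\]
So the two claims of the lemma are equivalent; it suffices to establish $i\ge 0$.

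For that, I would use the fact that in any graph $H$ of maximum degree at most $1$, every component is a vertex or an edge, so a largest independent set in $H$ has size exactly $|V(H)|$ minus the number of edges of $H$. Applying this inside $A$ (where $G[A]$ has $s$ edges), the maximum independent set of $G[A]$ has size $|A|-s$; since $A$ contains an independent set $A'$ of size $\alpha(G)$, in fact $x = |A'| = \alpha(G) = |A|-s$. Applying the same observation inside $B$ (where $G[B]$ has $t$ edges by Lemma~\ref{B1}), the maximum independent set of $G[B]$ has size $|B|-t$, and by the definition of $\alpha(G)$,
\[
|B|-t \;\le\; \alpha(G) \;=\; |A|-s.
\]

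Substituting $|A|-|B|=-2i$ turns this into $2i \le t-s$. Combining with $t-s = 3i$ from the edge count yields $2i \le 3i$, i.e., $i\ge 0$; and then $t-s = 3i \ge 0$, so $t\ge s$.

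The argument is short and I do not anticipate any real obstacle; the only thing to be careful about is justifying that the largest independent set inside the AI set $A$ has size precisely $|A|-s$ and genuinely equals $\alpha(G)$ (so that the chain of inequalities is tight on the $A$ side), which follows from condition M2 in the definition of a MAI set together with the structural description of graphs of maximum degree $\le 1$. Note that the hypothesis $g(G)\ge 16$ is not needed here — the lemma relies only on 3-regularity and the definitions.
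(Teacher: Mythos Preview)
Your proof is correct and follows essentially the same approach as the paper: both double-count $e[A,B]$ to obtain $t-s=3i$, and then compare $\alpha(G)=|A|-s$ with the bound $|B|-t\le\alpha(G)$ coming from the AI structure of $B$ to deduce $2i\le 3i$. Your additional remark that the girth hypothesis is unnecessary here is accurate.
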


\noindent \noindent  {\em Proof.} 
We count the number of edges with one end in $A$ and one end in $B$ in two ways. We have 
\begin{equation}\label{cut}
2s \cdot 2 + \left(\frac{n}{2}-i-2s\right) \cdot 3 = e[A,B] = 2t \cdot 2 + \left(\frac{n}{2}+i-2t\right) \cdot 3,
\end{equation}
 i.e.,
\begin{equation}\label{ts}
t-s = 3i.
\end{equation}
We also know that $x=\alpha(G)$, so 
$$x = \frac{n}{2}-i-s \ge \frac{n}{2}+i-t,$$ i.e.,
$$2i \le t-s = 3i,$$
which implies that
$$i \ge 0 \text{ and } t \ge s.\qed$$

\begin{lemma}\label{YZ} If $G\in \mathcal{G}_5(n)$, 
 then\\
(i) each vertex in $Z$ has degree at most one to $Y$; \\
(ii) each vertex in $Y$ has degree at most one to $Z$.
\end{lemma}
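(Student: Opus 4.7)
The plan is, in each part, to derive a contradiction from a hypothetical violation by constructing a modified vertex set $A^{new}$: for (i), one that yields an independent set of size $\alpha(G)+1$ (contradicting the definition of $\alpha(G)$), and for (ii), one that is an AI set strictly larger than $A$ and still contains a maximum independent set (contradicting condition M3 for $A$). I would prove (i) first and use it in (ii). The main obstacle throughout is the girth-based bookkeeping: verifying that the four or five relevant vertices around $z$ (respectively $y$) are all distinct and that no accidental edges between them appear. Each such obstruction would create a $3$- or $4$-cycle and is therefore ruled out by $g(G) \ge 5$.

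For (i), suppose $z \in Z$ has distinct neighbors $y_1, y_2 \in Y$, and let $y_i'$ denote the unique $G[A]$-neighbor of $y_i$. First I observe that $y_1 y_2 \notin E(G)$ (else $z y_1 y_2$ is a triangle); combined with each $y_i'$ being the unique $G[A]$-neighbor of $y_i$, this shows that $y_1, y_1', y_2, y_2'$ are four distinct vertices. Then take
\[
    A^{new} := (A \setminus \{y_1, y_2\}) \cup \{z\}.
\]
In $G[A^{new}]$ the vertex $z$ is isolated (its $A$-neighbors $y_1, y_2$ are removed and its third neighbor lies in $B$), both $y_1'$ and $y_2'$ are isolated (their sole $G[A]$-neighbors are gone), and every other vertex retains its $G[A]$-degree. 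Hence $G[A^{new}]$ has maximum degree $1$ with $|A|-1$ vertices and $s-2$ edges, so it contains an independent set of size $(|A|-1)-(s-2) = |A|-s+1 = x+1 = \alpha(G)+1$, contradicting the definition of $\alpha(G)$.

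For (ii), suppose $y \in Y$ has distinct neighbors $z_1, z_2 \in Z$, and let $y' \in Y$ be the $G[A]$-neighbor of $y$. Each $z_i$ has two $A$-neighbors, $y$ and some $u_i \in A$. Applying (i) to $z_i$ forces $u_i \notin Y$, so $u_i$ is isolated in $G[A]$. The bound $g(G) \ge 5$ then rules out $z_1 z_2 \in E(G)$, $y' z_i \in E(G)$, $u_i z_{3-i} \in E(G)$, and $u_1 = u_2$, each of which would produce a $3$- or $4$-cycle through $y$. Now set
\[
    A^{new} := (A \setminus \{y\}) \cup \{z_1, z_2\}.
\]
Then $|A^{new}| = |A|+1$, and $G[A^{new}]$ is AI: each $z_i$ gains the single new edge $z_i u_i$; each $u_i$, formerly isolated in $G[A]$, now has unique neighbor $z_i$; $y'$ becomes isolated; all other degrees are inherited. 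With $(s-1)+2 = s+1$ edges on $|A|+1$ vertices, $G[A^{new}]$ admits an independent set of size $(|A|+1)-(s+1) = |A|-s = x = \alpha(G)$. Thus $A^{new}$ is an AI set of size $|A|+1$ containing a maximum independent set, contradicting condition M3 for the MAI set $A$.
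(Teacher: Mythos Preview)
Your proof is correct and follows essentially the same approach as the paper: in (i) you replace $y_1,y_2$ by $z$ to build a too-large independent set, and in (ii) you replace $y$ by $z_1,z_2$ to build a larger AI set still containing a maximum independent set, using part (i) to force $u_i\notin Y$ and girth $\ge 5$ for the distinctness checks. The only stylistic difference is that you verify M2 by explicitly counting vertices and edges of $G[A^{new}]$, whereas the paper simply observes that a maximum independent set $A'\subseteq A\setminus\{y\}$ (respectively $A'\subseteq A\setminus\{y_1,y_2\}$) survives into the new set; both are equivalent and your extra bookkeeping (e.g.\ ruling out $y'z_i\in E(G)$ and $u_iz_{3-i}\in E(G)$) just makes explicit what the paper leaves to the reader.
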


\begin{proof}
(i) Suppose $z \in Z$ and  $N_G(z)=\{z', y_1,y_2\}$, where $z'\in Z$ and   $y_1,y_2\in Y$.
 Since  $g(G)\geq 4$,  $y_1\neq y_2$, $y_1y_2\notin E(G)$, and so $A-y_1-y_2$ contains an independent 
 set $A'$ with $|A'|=\alpha(G)$.
 Thus  the set $A'+z$ is an independent set of size $\alpha(G)+1$ contradicting the definition of $\alpha(G)$.

(ii) Similarly, suppose $y \in Y$ and  $N_G(y)=\{y', z_1,z_2\}$, where $y'\in Y$ and   $z_1,z_2\in Z$.
Then $A-y$ contains an independent 
 set $A'$ with $|A'|=\alpha(G)$.
For $i=1,2$, let $N_G(z_i)=\{z'_i, y,a_i\}$, where $z'_i\in Z$. By Part (i), $a_1,a_2\notin Y$. Since $g(G)\geq 5$,
$a_2\neq a_1$. Then  $(A-y) \cup \{z_1,z_2\}$ is an AI set containing $A'$ and is larger than $A$, a contradiction.
\end{proof}



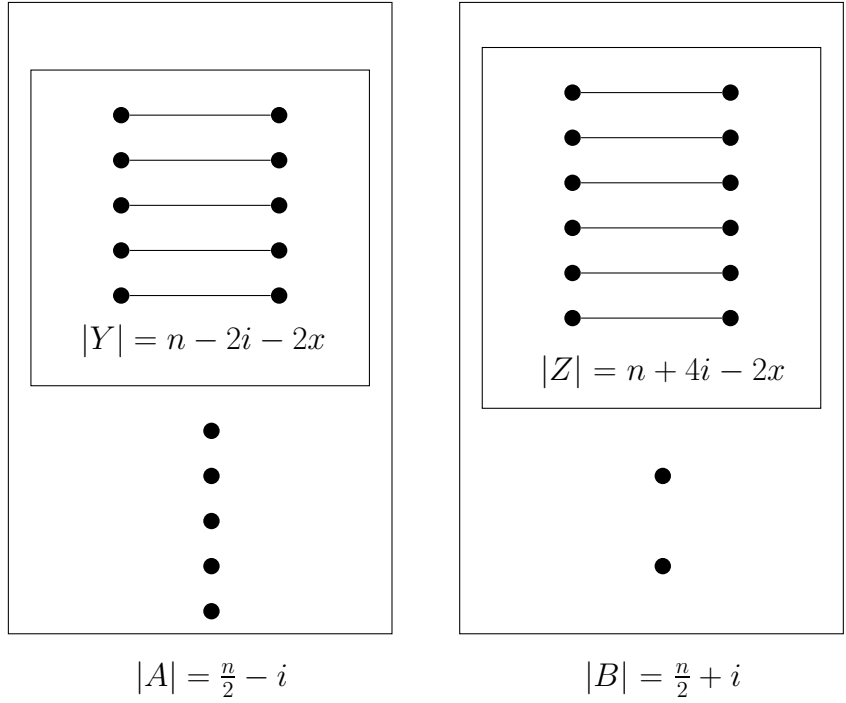
\begin{figure}[ht]\label{f1}
\begin{center}
\begin{tikzpicture}[scale=0.6, transform shape]

\draw  (-5.5,6) rectangle (3,-8);

\draw  (4.5,6) rectangle (13,-8);
\node [ shape=circle, minimum size=0.1cm,  fill = black!1000, align=center] (v1) at (-3,3.5) {};
\node [ shape=circle, minimum size=0.1cm,    fill = black!1000, align=center] (v2) at (0.5,3.5) {};
\node [ shape=circle, minimum size=0.1cm,    fill = black!1000, align=center] (v3) at (-3,2.5) {};
\node [  shape=circle, minimum size=0.1cm,  fill = black!1000, align=center] (v4) at (0.5,2.5) {};
\node [shape=circle, minimum size=0.1cm,    fill = black!1000,  align=center] (v5) at (-3,1.5) {};
\node [ shape=circle, minimum size=0.1cm,   fill = black!1000,   align=center] (v6) at (0.5,1.5) {};
\node [  shape=circle, minimum size=0.1cm,   fill = black!1000,   align=center] (v7) at (-3,0.5) {};
\node [ shape=circle, minimum size=0.1cm,    fill = black!1000,   align=center] (v8) at (0.5,0.5) {};
\node [  shape=circle, minimum size=0.1cm,    fill = black!1000,   align=center] (v9) at (-3,-0.5) {};
\node [  shape=circle, minimum size=0.1cm,    fill = black!1000,   align=center] (v10) at (0.5,-0.5) {};

\node [  shape=circle, minimum size=0.1cm,    fill = black!1000,  align=center] at (-1,-3.5) {};
\node [  shape=circle, minimum size=0.1cm,   fill = black!1000,  align=center] at (-1,-4.5) {};
\node [  shape=circle, minimum size=0.1cm,   fill = black!1000,   align=center] at (-1,-5.5) {};
\node [  shape=circle, minimum size=0.1cm,   fill = black!1000,   align=center] at (-1,-6.5) {};
\node [ shape=circle, minimum size=0.1cm,   fill = black!1000,    align=center] at (-1,-7.5) {};
\node [ shape=circle, minimum size=0.1cm,   fill = black!1000,    align=center] (v11) at (7,4) {};
\node [  shape=circle, minimum size=0.1cm,   fill = black!1000, align=center] (v12) at (10.5,4) {};
\node [ shape=circle, minimum size=0.1cm,    fill = black!1000,    align=center] (v13) at (7,3) {};
\node [  shape=circle, minimum size=0.1cm,    fill = black!1000,    align=center] (v14) at (10.5,3) {};
\node [  shape=circle, minimum size=0.1cm,   fill = black!1000,   align=center] (v15) at (7,2) {};
\node [ shape=circle, minimum size=0.1cm,   fill = black!1000,    align=center] (v16) at (10.5,2) {};
\node [ shape=circle, minimum size=0.1cm,    fill = black!1000,    align=center] (v17) at (7,1) {};
\node [ shape=circle, minimum size=0.1cm,  fill = black!1000,  align=center] (v18) at (10.5,1) {};
\node [  shape=circle, minimum size=0.1cm,   fill = black!1000,    align=center] (v19) at (7,0) {};
\node [  shape=circle, minimum size=0.1cm,   fill = black!1000,    align=center] (v20) at (10.5,0) {};

\node [  shape=circle, minimum size=0.1cm,   fill = black!1000,  align=center] at (9,-4.5) {};

\node [ shape=circle, minimum size=0.1cm,   fill = black!1000,   align=center] at (9,-6.5) {};
\draw  (v1) edge (v2);
\draw  (v3) edge (v4);
\draw  (v5) edge (v6);
\draw  (v7) edge (v8);
\draw  (v9) edge (v10);
\draw  (v11) edge (v12);
\draw  (v13) edge (v14);
\draw  (v15) edge (v16);
\draw  (v17) edge (v18);
\draw  (v19) edge (v20);

\node [ font={\huge\bfseries}, shape=rectangle, minimum size=1cm, text=black, very thick, draw=white!0, top color=white,bottom color=green!0, align=center] at (-1,-9) {$|A|=\frac{n}{2}-i$};
\draw  (-5,4.5) rectangle (2.5,-2.5);

\node [ font={\huge\bfseries}, shape=rectangle, minimum size=1cm, text=black, very thick, draw=white!0, top color=white,bottom color=green!0, text width=4cm, align=center] at (9,-9) {$|B|=\frac{n}{2}+i$};
\node [ font={\huge\bfseries}, shape=rectangle, minimum size=1cm, text=black, very thick, draw=white!0, top color=white,bottom color=green!0, text width=5.8cm, align=left] at (-1,-1.5) {$|Y|=n-2i-2x$};
\node [ font={\huge\bfseries}, shape=rectangle, minimum size=1cm, text=black, very thick, draw=white!0, top color=white,bottom color=green!0, text width=5.8cm, align=center] at (9,-2.2) {$|Z|=n+4i-2x$};

\draw  (5,5) rectangle (12.5,-3);
\node [shape=circle, minimum size=0.1cm, fill = black!1000,   align=center] (v21)  at (7,-1) {};
\node [ shape=circle, minimum size=0.1cm, fill = black!1000,   align=center] (v22) at (10.5,-1) {};
\draw  (v21) edge (v22);

\end{tikzpicture}
\caption{A MAI set $A$.}
\end{center}
\end{figure}

Let $J=\{y_1z_1,\ldots,y_jz_j\}$ be the set of all edges connecting $Y$ with $Z$ in $G$. By Lemma~\ref{YZ}, $J$ is a matching in $G$.
Define an auxiliary graph $H=H(A)$ as follows: $V(H)=J$, and $y_{\ell} z_{\ell}$ is adjacent to $y_{\ell'} z_{\ell'}$ if
$y_{\ell} y_{\ell'}\in E(G)$ or $z_{\ell} z_{\ell'}\in E(G)$. By construction, the maximum degree of $H$ is at most $2$ and a cycle of length $c$ in $H$ corresponds to a cycle of length $2c$ in $G$.

\begin{lemma}\label{J1}
The graph $G$ contains at least $I(H)$ distinct MAIs.
\end{lemma}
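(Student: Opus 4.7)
The plan is to give an injection $S \mapsto A_S$ from independent sets of $H$ into MAIs of $G$. Set $Y_S := \{y_\ell : y_\ell z_\ell \in S\}$, $Z_S := \{z_\ell : y_\ell z_\ell \in S\}$, and $A_S := (A \setminus Y_S) \cup Z_S$. Then $|A_S| = |A|$, so property M3 is immediate once M1 and M2 hold, and for $S \neq S'$, any $y_\ell z_\ell \in S \triangle S'$ places $y_\ell$ in exactly one of $A_S, A_{S'}$, so the assignment is injective. It thus suffices to verify, for every independent $S$ in $H$, that $A_S$ is AI (M1) and contains a maximum independent set of $G$ (M2).

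For M1, I check $d_{G[A_S]}(v) \leq 1$ by vertex type. If $v \in Y \setminus Y_S$, its $G[A]$-partner is its only potential $A_S$-neighbor, since its unique $Z$-neighbor (if $v$ lies in $V(J)$) corresponds to the $J$-edge at $v$, which by definition is not in $S$, so that $Z$-neighbor is not in $Z_S$. If $v = z_\ell \in Z_S$, the vertex $y_\ell$ is removed; the $G[B]$-partner $z'_\ell$ cannot be in $Z_S$, since otherwise $z'_\ell = z_{\ell'}$ with $y_{\ell'} z_{\ell'} \in S$ would make $z_\ell z_{\ell'} \in E(G)$ and hence $y_\ell z_\ell, y_{\ell'} z_{\ell'}$ adjacent in $H$, contradicting independence of $S$; and the third $G$-neighbor of $z_\ell$ lies in $A \setminus Y$ by Lemma~\ref{YZ}(i) and remains in $A_S$. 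So $d_{G[A_S]}(z_\ell) = 1$. The delicate case is $v \in A \setminus Y$: $v$ is isolated in $G[A]$, so its $A_S$-neighbors are precisely the vertices of $Z_S \cap N_G(v)$, and I claim this intersection has size at most $1$.

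Suppose for contradiction that $z_1, z_2 \in Z_S \cap N_G(v)$, with $J$-partners $y_1, y_2$ and $G[A]$-partners $y'_1, y'_2$. Then $z_1 z_2 \notin E(G)$ (else $v z_1 z_2$ would be a triangle), $y_1 y_2 \notin E(G)$ by independence of $S$ in $H$, and a quick check using Lemma~\ref{YZ} and the matching structure shows that $v, z_1, z_2, y_1, y_2, y'_1, y'_2$ are pairwise distinct. Since $v$ is isolated in $G[A]$, any maximum independent set $A' \subseteq A$ (existing by M2) must contain $v$, forcing $z_1, z_2 \notin A'$. Define
\[
A^* := \bigl(A' \setminus (\{v\} \cup (A' \cap \{y_1, y_2\}))\bigr) \cup \{z_1, z_2\} \cup \{y'_i : y_i \in A'\},
\]
so that $|A^*| = \alpha(G) + 1$. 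Routine verification shows $A^*$ is independent: each $z_i$'s three $G$-neighbors $y_i, z'_i, v$ all lie outside $A^*$ (with $y_i$ removed when in $A'$), each inserted $y'_i$ has its only $G[A]$-neighbor $y_i$ removed, and Lemma~\ref{YZ}(i) rules out $y'_i z_j \in E(G)$ (which would force $y'_i = y_j$ and hence $y_1 y_2 \in E(G)$). This contradicts the definition of $\alpha(G)$, establishing M1.

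For M2, take a maximum independent set $A' \subseteq A$ and form $A''$ by replacing each $y_\ell \in A' \cap Y_S$ with its $G[A]$-partner $y'_\ell$. Each inserted $y'_\ell$ has as its only $G[A]$-neighbor the removed $y_\ell$, so $A''$ is independent of size $\alpha(G)$; and $y'_\ell \in A_S$ because $y'_\ell \in Y_S$ would make $y_\ell y'_\ell \in E(G)$ connect two vertices of $S$ in $H$. The main obstacle is the collision argument in M1: $H$ records only direct $y$-$y$ and $z$-$z$ adjacencies in $G$, so two edges of $J$ can meet at a common $A \setminus Y$-neighbor without being adjacent in $H$, and ruling out this configuration genuinely requires invoking the M2 maximality of $A$, rather than just the girth.
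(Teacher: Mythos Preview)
Your proof is correct and follows the same construction as the paper: for each independent set $S$ in $H$, set $A_S=(A\setminus Y_S)\cup Z_S$ and verify that $A_S$ is a MAI. The only substantive difference is in the collision step for $v\in A\setminus Y$. The paper first fixes a maximum independent set $A'\subseteq A\setminus Y_S$ (which exists because $Y_S$ is independent in $G$), so that $y_1,y_2\notin A'$ automatically; then the contradiction is the clean swap $(A'-v)\cup\{z_1,z_2\}$, with no need to reinsert the partners $y'_i$. You instead start from an arbitrary maximum independent set $A'\subseteq A$ and compensate by the more elaborate set $A^*$, swapping out any $y_i\in A'$ for $y'_i$. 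Both arguments are valid; the paper's choice of $A'$ avoiding $Y_S$ up front streamlines both M1 and M2 simultaneously, whereas your route handles them with separate (correct) swaps.
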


\begin{proof} Let $J'=\{y_1z_1,\ldots,y_{j'}z_{j'}\}$ be an arbitrary independent set in $H$.
Then the sets $Y_1=\{y_1,\ldots,y_{j'}\}$ and $Z_1= \{z_1,\ldots,z_{j'}\}$ are independent in $G$.
By the definition of $Y$, $A-Y_1$ contains an independent set $A'$ with $|A'|=\alpha(G)$.
Let $A_1=(A- Y_1)\cup Z_1$. By Lemma~\ref{YZ}, the  degree in $G[A_1]$ of every vertex in $(Y-Y_1)\cup Z_1$ is at most $1$.
If a vertex $a\in A-Y$ is adjacent to two vertices, say $z_1,z_2$ in $Z_1$, then the set $(A'-a)\cup \{z_1,z_2\}$ is independent and
is larger than $A'$, a contradiction. Thus, $A_1$ is an AI set.
Since $|A_1|=|A|$, this proves the lemma.
\end{proof}



\begin{remk}
Recall that $|A| = \frac{n}{2} - i$, $|B|=\frac{n}{2} + i,$ $|Y| = 2s = 2 (\frac{n}{2}-i-x),$ and $|A-Y| = 2x - \frac{n}{2} + i.$  
By~\eqref{ts}, we know that $t=3i+s= \frac{n}{2} + 2i - x.$ Therefore, $|Z| = 2t= 2 (\frac{n}{2}+2i-x)$ and $|B-Z| =2x - \frac{n}{2} - 3i$. 
By~\eqref{cut},  $e[A,B] = 2x + \frac{n}{2} - i.$
\end{remk}

\section{The set up of the proof}
\subsection{Restating the theorem}
We will use Theorem~\ref{McKay} of McKay in the following stronger form.

\begin{theo}[McKay~\cite{MK1}]\label{MK1}
 For every $\epsilon > 0$, there exists an $N>0$ such that for each $n>N$,  
 $${|\{ F |  F \in \mathcal{F}_3(n)\,:\;  \alpha(\pi(F))>0.45537n\}|} \ <\epsilon \cdot {(3n-1)!!}.$$
\end{theo}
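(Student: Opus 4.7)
The plan is to extract this probabilistic statement from the first-moment computation that is already the engine of McKay's proof of Theorem~\ref{McKay}. For each integer $k$ with $0\le k\le n$, let $X_k(F)$ denote the number of independent sets of size exactly $k$ in the multigraph $\pi(F)$. A direct double-counting argument computes
\[
\sum_{F\in\mathcal{F}_3(n)} X_k(F) \;=\; \binom{n}{k}\, N(n,k),
\]
where $N(n,k)$ is the number of pairings of $W_n$ in which no pair lies entirely above a fixed $k$-subset of $V_n$; this $N(n,k)$ has an elementary closed form in terms of double factorials. Dividing by $(3n-1)!!$ and applying Stirling's formula, the expectation takes the shape $\mathbb{E}[X_{\lfloor\beta n\rfloor}]=\exp\bigl(n\,h(\beta)+O(\log n)\bigr)$ for an explicit function $h$ analyzed in~\cite{MK1}.

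The second step is to invoke the analytic estimate from~\cite{MK1} asserting that $h(\beta)<0$ for every $\beta>0.45537$; this is the calculus lemma that underlies Theorem~\ref{McKay}. Given it, summing over the at most $n$ integer values $k>0.45537\,n$ yields $\sum_{k>0.45537\,n}\mathbb{E}[X_k]\to 0$ as $n\to\infty$. By Markov's inequality, the probability that a uniformly chosen $F\in\mathcal{F}_3(n)$ satisfies $\alpha(\pi(F))>0.45537\,n$ is at most this same sum, so it is smaller than any preassigned $\epsilon>0$ once $n$ is sufficiently large; multiplying through by $|\mathcal{F}_3(n)|=(3n-1)!!$ produces the stated counting inequality.

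The only substantive ingredient is the negativity of $h$ above the threshold, and this is exactly the content of McKay's estimate in~\cite{MK1}; everything else is bookkeeping (Stirling, binomial estimates, and a union bound over the $n$ admissible values of $k$). There is no real obstacle here, since McKay's argument is already a first-moment calculation on the pairing model and was merely \emph{reported} in~\cite{MK1} as the infimum bound of Theorem~\ref{McKay} after passing from pairings to graphs via Corollary~\ref{MK2}; the present Theorem~\ref{MK1} just records the intermediate pairing-level estimate directly, in the form convenient for later sections.
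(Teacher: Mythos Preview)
The paper does not prove this theorem at all: it is quoted from McKay~\cite{MK1} and used as a black box (the sentence introducing it reads ``We will use Theorem~\ref{McKay} of McKay in the following stronger form''). So there is no proof in the present paper to compare your proposal against.

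Your sketch is a reasonable outline of the first-moment argument that underlies McKay's result, and your closing paragraph correctly identifies that the present statement is just the pairing-level intermediate step of~\cite{MK1} before one passes to simple graphs via Corollary~\ref{MK2}. One caveat: McKay's actual computation in~\cite{MK1} does not count \emph{all} independent sets of size $k$; rather, to sharpen the threshold from Bollob\'as's $6/13$ down to $0.45537$, he counts independent sets together with some auxiliary structure (in effect a weighting that reduces overcounting, the same idea the present paper pushes further with MAI sets). The naive count $\binom{n}{k}N(n,k)$ you wrote down gives a worse threshold. So if you intend your write-up to be self-contained rather than a pointer to~\cite{MK1}, you would need to reproduce that refinement; if you are content to cite~\cite{MK1} for the analytic estimate, then your proposal is adequate and matches how the paper itself treats the result.
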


We will show that  ``almost all" cubic labeled graphs of girth at least $16$ have independence ratio
at most $0.454$. In view of Theorem~\ref{BW}, the following more technical statement implies
 Theorem~\ref{main theorem}.

\begin{theo}\label{solve}
For every $\epsilon > 0,$ there is an $N > 0$ such that for each $n > N,$
\begin{equation} \label{d}
{|\{  F \in \mathcal{G}'_{16}(n)\, :\;   \alpha(\pi(F))>0.454n\}|} \ <\epsilon\, (3n-1)!!.
\end{equation}
\end{theo}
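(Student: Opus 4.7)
The plan is to refine McKay's counting argument by tagging each cubic graph $G$ of large independence number with a MAI set $A$ rather than a maximum independent set. The improvement comes from Lemma~\ref{J1}: the auxiliary graph $H(A)$ satisfies $\Delta(H(A)) \le 2$ and $g(H(A)) \ge 8$ (the latter because $g(G) \ge 16$ and a cycle of length $c$ in $H$ corresponds to one of length $2c$ in $G$), so Remark~\ref{rm8} yields $I(H(A)) \ge 1.618^{j}$, and hence many distinct MAI sets per bad pairing. This is a strong overcounting correction in a double-count that is not available from the maximum independent set version alone.

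First I would reduce and parametrise. Theorem~\ref{MK1} lets me restrict to $F$ with $0.454n < \alpha(\pi(F)) \le 0.45537n$, since the contribution from $\alpha > 0.45537n$ is already $o((3n-1)!!)$. For each such $F$, fix a MAI set $A$ and attach the parameters $x = \alpha(\pi(F))$, $i = n/2 - |A|$, $s = |Y|/2$, $j = |J|$. By Lemmas~\ref{its} and~\ref{YZ} together with the remark at the end of Section~3, these satisfy $i \ge 0$, $t = s + 3i$, $0 \le j \le 2s$, and they determine $|A-Y|$, $|B-Z|$, and $e[A,B]$ as linear functions of $(n, x, i)$.

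Next I would count in the configuration model the number $N(x, i, s, j)$ of tuples $(F, A, Y, Z, J)$ with $F \in \mathcal{F}_3(n)$ and $\pi(F)$ realising the MAI structure. This is standard: choose the partition $V_n = A \sqcup B$ and the subsets $Y \subseteq A$, $Z \subseteq B$; designate which of each vertex's three points play which role (matching partner inside $Y$ or $Z$, endpoint of a $J$-edge, and so on); pair the matching points inside $Y$, inside $Z$, and across $J$; and pair the remaining $A$-points with $B$-points while forbidding the interior pairs already excluded by the MAI conditions. The count is a product of binomials, small powers of $3$, and double factorials. Applying Lemma~\ref{J1}, each bad $F$ is overcounted at least $1.618^{j}$ times among tuples with the same parameters, so the number of bad pairings with parameters $(x, i, s, j)$ is at most $N(x, i, s, j)/1.618^{j}$. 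Normalising by $(3n-1)!!$ and applying Stirling gives an expression of the form $\exp(n\,\Phi(\xi, \iota, \sigma, \mu) + o(n))$ with $(\xi, \iota, \sigma, \mu) = (x, i, s, j)/n$.

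The final step, and the main obstacle, is the analytic verification that $\sup \Phi < 0$ on the feasible region $\xi > 0.454$, $\iota \ge 0$, $0 \le \mu \le 2\sigma$, with $t = s + 3i$. The function $\Phi$ is a sum of entropy-type terms coming from the multinomial choices, several with opposing signs, together with the decisive negative correction $-\mu \log 1.618$ provided by Lemma~\ref{J1}. One must show that the optimum $\Phi^{*}(\xi)$ over $(\iota, \sigma, \mu)$ crosses zero exactly at $\xi = 0.454$; this requires identifying the active constraints at the maximiser (likely via Lagrange multipliers) and checking monotonicity in $\xi$. Girth $16$ is essential here: any smaller girth would drop $g(H)$ below $8$, weaken the constant $1.618$ in Remark~\ref{rm8}, and raise the achievable threshold above $0.454$. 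A final summation over the polynomially many integer tuples $(x, i, s, j)$ converts $\sup \Phi < 0$ into the bound~\eqref{d}.
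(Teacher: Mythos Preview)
Your proposal is correct and follows essentially the same route as the paper: restrict via Theorem~\ref{MK1} to $0.454n<x\le 0.45537n$, tag each bad pairing by a MAI set together with its $(i,j)$-parameters, count the tagged objects in the configuration model, divide by the $1.618^{j}$ supplied by Lemma~\ref{J1} and Remark~\ref{rm8}, pass to the exponential scale via Stirling, and show the resulting rate is strictly below~$1$. Two small remarks: your parameter $\sigma=s/n$ is not free since $s=\tfrac{n}{2}-i-x$, so the optimisation is really three-dimensional (the paper's $(\chi,\zeta,\xi)=(x/n,i/n,j/n)$); and the paper does not locate a zero-crossing at $\chi=0.454$ but instead shows directly that $h(0.454,\cdot,\cdot)\le 0.999983$ via a second-derivative/concavity argument in $\zeta$ (Lemma~\ref{max}) and then that $\partial_\chi \ln h<0$ throughout $\Omega$ (Lemma~\ref{ph}), so the maximum over the whole region is attained at $\chi=0.454$.
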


A referee asked whether one can derive from Theorem~\ref{solve}
that
{\em a random cubic graph $G \in G_{n,3}$ asymptotically almost surely satisfies $\alpha(G) \le 0.454n$}. 
We do not see how to derive this from the statement of the theorem  but think that one can modify our proof to show this fact.


The rest of the paper is a proof  of Theorem~\ref{solve}. 
 By definition, every graph has a MAI set. So, for large $n$, nonnegative integers $x\geq 0.454n$ and $i\leq \frac{n}{2}-x$, 
and each set $A$ of size $\frac{n}{2}-i$ with a fixed matching of size $\frac{n}{2}-i-x$
we will estimate the total
{\em $x$-weight} of configurations $F\in \mathcal{G}'_{16}(n)$ in which $A$ forms a MAI set. The idea of the weight (used by McKay in~\cite{MK1}) is  
to decrease overcount of the configurations containing a given
MAI set, but guarantee that the total weight of each configuration containing at least one  MAI set with independence number $x$  would be at least $1$.

\subsection{Setup of the proof of Theorem~\ref{solve}}
An {\em AI-pair} on  $[n]$ is 
 a pair  $(A,R)$ consisting of a set $A\subset [n]$ and a matching $R$ on a subset of $A$ such that $E(G[A]) = R.$
 The {\em independence number}, $\alpha(A,R)$, of an AI-pair $(A,R)$ is $|A|-|R|$.
 Let $\mathcal{P}(n,x)$ denote the family of all AI-pairs $(A,R)$ on $[n]$ with $\alpha(A,R)=x$.
 
A {\em preimage} of an AI-pair $(A,R)$ on $[n]$ is a pair $(\hat A,\hat R)$ where $\hat A=A\times [3]$ and $\hat R$ is
a matching on a subset of $\hat A$ with $|\hat R|=|R|$ such that for each edge $(i,j)(i',j')\in \hat R$,
$ii'\in R$. In other words, each edge  $e\in R$ is obtained from an edge in $\hat e\in\hat R$ by ignoring the second coordinates of the ends of $\hat e$, and this mapping is one-to-one.

 By  the $x$-weight of a configuration $F$
we mean 
\begin{equation}\label{weight}
\parbox{13.5cm}{$\omega_x(F):=$ the reciprocal of the number of preimages $(\hat A,\hat R)\subseteq F$ of 
AI-pairs $(A,R)$ on $[n]$
such that $A$ is an AI set in $\pi(F)$ with $E(\pi(F)[A])=R$ and $\alpha(A,R)=x$.}
 \end{equation}
 By the definition of $x$-weight, each pairing $F\in \mathcal{G}'_{16}(n)$ with $\alpha(\pi(F))=x$
 contributes exactly $1$ to
 \begin{equation}\label{j25}
\sigma(n,x,16):=\sum_{(A,R)\in \mathcal{P}(n,x)}\{\omega_x(F'): \mbox{$F'\in \mathcal{G}'_{16}(n)$ and $(\hat A,\hat R)$ \em is an induced subpairing of $F'$}\}.
\end{equation}
 It follows that
  \begin{equation}\label{j251}
\sigma(n,x,16)\geq \left|\{\mbox{$F'\in \mathcal{G}'_{16}(n)$  with $\alpha(\pi(F'))=x$}\}\right|.
\end{equation}

\begin{lemma}\label{formula}
Let $n$ be a positive even integer and  $x$ be an integer with $0.454n<x\leq 0.45537n$. The number of pairings $F \in \mathcal{G}'_{16}(n)$ 
such that $\pi(F)$ has a MAI set $A$ with $|A'| = x$ is at most $$q(x,n):=\sum\limits_{i=0}^{\frac{n}{2} - x} {n \choose {\frac{n}{2} - i}}
 \cdot \frac{(\frac{n}{2} - i)! \cdot 3^{(n-2x-2i)}}{(2x+i-\frac{n}{2})! 
\cdot 2^{\frac{n}{2}-x-i} \cdot (\frac{n}{2}-x-i)!}$$
$$ \cdot \frac{(\frac{n}{2} + i)! \cdot 3^{n-2x+4i}}{(2x-3i-\frac{n}{2})! \cdot 2^{\frac{n}{2}-x+2i} \cdot
 (\frac{n}{2}-x+2i)!}$$
$$\cdot \sum\limits_{j = 0}^{n-2i-2x} {n - 2i - 2x \choose j} \cdot {n - 2x + 4i \choose j}\cdot 2^{2j} \cdot j! \cdot \left(\frac{1}{1.618}\right)^{j}$$
$$\cdot \frac{(3(2x-\frac{n}{2} - 3i))! \cdot (3(2x - \frac{n}{2} + i))!}{(3(2x-\frac{n}{2}-3i)-2(n-2i-2x)+j)!}.$$
\end{lemma}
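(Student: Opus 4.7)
The plan is to bound $N := |\{F \in \mathcal{G}'_{16}(n) : \pi(F) \text{ has an MAI set } A \text{ with } |A'|=x\}|$ by a weighted double count of (pairing, MAI set) pairs, where the weight harvests the abundance of MAI sets guaranteed by Lemma~\ref{J1}. For each such pairing $F$ and each MAI set $A$ of $\pi(F)$, let $j(A) := |V(H(A))|$. By construction $H(A)$ has maximum degree at most $2$ and, since every $c$-cycle in $H(A)$ lifts to a $2c$-cycle in $G$ while $g(G) \ge 16$, $H(A)$ has girth at least $8$. Remark~\ref{rm8} then gives $I(H(A)) \ge 1.618^{j(A)}$, and Lemma~\ref{J1} forces $\pi(F)$ to contain at least $1.618^{j(A)}$ distinct MAI sets. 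Picking an MAI $A^\star$ of $\pi(F)$ maximizing $j$, we obtain $\sum_{A} 1.618^{-j(A)} \ge 1.618^{j(A^\star)}\cdot 1.618^{-j(A^\star)} = 1$ for every bad $F$, so $N \le \sum_F \sum_A 1.618^{-j(A)}$.

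I would then swap sums and enumerate over $A$ first, partitioning by $i = \tfrac{n}{2}-|A| \in \{0,\ldots,\tfrac{n}{2}-x\}$ (cf.~Lemma~\ref{its}). For each fixed $i$, the combinatorial data determining a pair $(\pi(F), A)$ is built in stages: pick $A$ in $\binom{n}{n/2-i}$ ways; pick the matching $R = E(\pi(F)[A])$ of size $s := \tfrac{n}{2}-x-i$ by first choosing $|Y|=2s$ matched vertices and then pairing them, and pick its preimage $(\hat A,\hat R)\subseteq F$ by selecting one of the $3$ points at each endpoint of each edge (contributing $3^{2s}=3^{n-2x-2i}$); analogously for $R' = E(\pi(F)[B])$ of size $t := \tfrac{n}{2}+2i-x$ (contributing $3^{2t}=3^{n-2x+4i}$); and finally the $Y$-$Z$ matching $J$ of size $j$, which is a matching by Lemma~\ref{YZ}, chosen by selecting $j$ endpoints in each of $Y$ and $Z$, pairing them, and choosing one of the $2$ still-free points at each end for the preimage (contributing $2^{2j}$). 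These steps produce exactly the factors $\binom{n}{n/2-i}$, the two long fractions, and $\binom{n-2i-2x}{j}\binom{n-2x+4i}{j}\,j!\,2^{2j}$, together with the weight $1.618^{-j}$.

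It remains to count completions of this partial pairing. The unpaired points of $\hat A$ and $\hat B$ each number $e[A,B]-j$ and must be matched without creating any further $Y$-$Z$ edges (since $J$ captures all of them). The constraint forces the $u := 2|Y|-j$ leftover points of $Y$ to pair with $B-Z$, the $v := 2|Z|-j$ leftover points of $Z$ to pair with $A-Y$, and the remaining points of $A-Y$ to pair with the remaining points of $B-Z$ (equal in number). The three sub-counts $\tfrac{(3|B-Z|)!}{(3|B-Z|-u)!}$, $\tfrac{(3|A-Y|)!}{(3|A-Y|-v)!}$, and $(3|A-Y|-v)!$ multiply to $\tfrac{(3|A-Y|)!(3|B-Z|)!}{(3|B-Z|-2|Y|+j)!}$, matching the final factor in $q(x,n)$ after substituting $|Y|=n-2i-2x$, $|A-Y|=2x+i-\tfrac{n}{2}$, and $|B-Z|=2x-3i-\tfrac{n}{2}$. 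Multiplying all factors and summing over $i$ and $j$ yields $N \le q(x,n)$. The principal subtlety is the weighting step, which requires comparing against the MAI maximizing $j$ so that the savings $1.618^{-j}$ can be legitimately distributed per MAI; a secondary point is that the completion count is an over-count, ignoring both the girth requirement and the actual maximality of $A$ as an MAI, but this is harmless for an upper bound.
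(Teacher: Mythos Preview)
Your proposal is correct and follows essentially the same approach as the paper: a weighted double count over (pairing, MAI set) pairs with weight $1.618^{-j}$ justified via Lemma~\ref{J1} and Remark~\ref{rm8}, followed by the identical staged enumeration of $A$, the matchings $R$, $R'$, the $Y$--$Z$ matching $J$, their point-level preimages, and the three bipartite completion counts. Your device of selecting an MAI $A^\star$ maximizing $j$ to certify $\sum_A 1.618^{-j(A)}\ge 1$ is a valid variant of the paper's Step~5, where the same bound is phrased as $\omega_x(F)\le 1.618^{-j}$.
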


\noindent \noindent  {\em Proof.}
By~\eqref{j251}, it is enough to show that $\sigma(n,x,16)\leq q(x,n)$.
Below we describe a procedure of constructing for every AI-pair $(A,R)$ on $[n]$ with $\alpha(A,R)=x$
 all pairings in $F \in \mathcal{G}_{16}'(n)$ for which $A$ is a MAI set. Not every obtained pairing will be in $\mathcal{G}_{16}'(n)$ and some pairings will have independence number larger than $x$, but every $F \in \mathcal{G}_{16}'(n)$ such that $A$ is a MAI set in $\pi(F)$ will be a result of this procedure.

\begin{itemize}

\item[0.] Choose nonnegative integers $n,x,i,j$ such that $n$ is even, $0.454n<x\leq 0.45537n$, $i\leq \frac{n}{2} - x$, and
$j\leq \frac{n}{2} - x-i$.

\item[1.] Choose a  set $A\subset [n]$ with $|A|=\frac{n}{2} - i$. There are $n \choose {\frac{n}{2} - i}$ ways to do it.

\item[2.] Choose a matching $R$ on $A$ with $|R|=\frac{n}{2}-x-i$.
There are  $$\frac{(\frac{n}{2} - i)!  }{(2x+i-\frac{n}{2})! \cdot 2^{\frac{n}{2}-x-i} \cdot (\frac{n}{2}-x-i)!}$$ ways to do it.
 Then there are $3^{n-2x-2i}$ ways to decide which point of each chosen end of an edge in $R$  will be the end of the corresponding edge in $F$. 

\item[3.] Similarly to Step 2, we have $$\frac{(\frac{n}{2} + i)! }{(2x-3i-\frac{n}{2})! \cdot 2^{\frac{n}{2}-x+2i} \cdot (\frac{n}{2}-x+2i)!}$$ ways to construct a matching $R'$ of $\frac{n}{2}-x+2i$ edges on $B:=[n]-A$, since
 $|B|=\frac{n}{2}+i$. After that there are $3^{n-2x+4i}$ ways to decide which point of each chosen 
 end of an edge in $R'$  will be the end of the corresponding edge in $F$. 

\item[4.] Let $Y$ (respectively, $Z$) be the set of vertices covered by the matching $R$ (respectively, $R'$).
By Lemma~\ref{YZ}, if $A$ is a MAI-set in $\pi(F)$, then the set of edges connecting $Y$ with $Z$ is a matching.
If this matching, say $M$ has $j$ edges, then there are 
  $n - 2i - 2x \choose j$ ways to choose the set of the ends of $M$ in  $Y$ and ${n - 2x + 4i \choose j}j!$ 
  ways to choose  the ends of $M$ in  $Z$. 
  Since there are $2$ free points left for each vertex in $Y$ and $Z$, we have $2^{2j}$ ways to choose which point of each vertex in $Y$ and $Z$ to be used to form an edge in $M$. 

\item[5.]
  By Lemma~\ref{J1} 
  each pairing $F \in \mathcal{G}_{16}'(n)$ containing a MAI set $A$ with $j$ edges between $Y$ and $Z$ contains at least $I(2,8)^{j}$ distinct
MAI sets of the same cardinality.  
  By Lemma~\ref{GR},
 $I(2,8)^{j}  \ge1.618^{j}$. 
 Hence by~\eqref{weight},  $\omega_x(F)\leq 1.618^{-j}$.

\item[6.]
Now we choose for each remaining free point $p$ from vertices in $Y$ a free point $q$ in a vertex in $B-Z$
and add edge $pq$. There are $$\frac{(3(2x-\frac{n}{2} - 3i))! }{(3(2x-\frac{n}{2}-3i)-2(n-2i-2x)+j)!}$$ ways to do it.

\item[7.]
 Similarly to Step 6, we choose for each remaining free point $q$ from vertices in $Z$ a free point $p$ in
 a vertex in $A-Y$ and  add edge $pq$.
 There are $$\frac{3(2x - \frac{n}{2} + i))!}{(3(2x-\frac{n}{2}+i)-2(n-2x+4i)+j)!}$$ ways to do it. 

\item[8.]
Finally, there are $3(2x-\frac{n}{2}+i)-2(n-2x+4i)+j = 10x - \frac{7n}{2} - 5i + j$ free points left in $A$ and 
$10x - \frac{7n}{2} - 5i + j$ free points left in $B$. We have $(10x - \frac{7n}{2} - 5i + j)!$ 
ways to complete a pairing on $W_n$.\qed
\end{itemize}

\medskip

In the proofs below we will use Stirling's formula: { For every $n\geq 1$,}
\begin{equation}\label{SF}
\sqrt{2\pi n}\left(\frac{n}{e}\right)^n\leq n!\leq \sqrt{2\pi n}\left(\frac{n}{e}\right)^n\,e^{1/12n}.
\end{equation}
We will also use the notation $\frac{\partial}{\partial j}$ to denote the partial derivative with respect to $j$. Moreover, we use the domain $x \ge 0$ and define $\ln(0) = -\infty$ when we consider $\ln x$.

\begin{lemma}\label{cl11}
Let $n$ be a positive even integer and $x$ be an integer satisfying  $0.454n<x\leq 0.45537n$. Let
\begin{equation}\label{j253}
\Omega=\{(\chi,\zeta,\xi): 0.454<\chi \leq 0.45537,\; 0\leq \zeta\leq  \frac{1}{2} - \chi,\; 0\leq \xi\leq 1-2\chi-2\zeta\}.
\end{equation}
Let $$f(\chi,\zeta):=$$
$$\frac{3^{\frac{1}{2}-4\chi+2\zeta} \cdot (1-2\chi-2\zeta)^{1-2\chi-2\zeta} \cdot
 (1-2\chi+4\zeta)^{1-2\chi+4\zeta} \cdot (6\chi-\frac{3}{2}+3\zeta)^{6\chi-\frac{3}{2}+3\zeta}
  \cdot (6\chi-\frac{3}{2}-9\zeta)^{6\chi-\frac{3}{2}-9\zeta}}{(2\chi+\zeta-
  \frac{1}{2})^{2\chi+\zeta-\frac{1}{2}} \cdot 2^{1-2\chi+\zeta} \cdot (\frac{1}{2}-\chi-\zeta)^{\frac{1}{2}-\chi-
  \zeta}\cdot (\frac{1}{2}-\chi+2\zeta)^{\frac{1}{2}-\chi+2\zeta} \cdot (2\chi-3\zeta-
  \frac{1}{2})^{2\chi-3\zeta-\frac{1}{2}}},$$
$$g(\chi,\zeta,\xi) := $$
$$\frac{ 2^{2\xi} \cdot (\frac{1}{1.618})^\xi }{ \xi^\xi \cdot (1-2\chi-2\zeta-\xi)^{1-2\chi-2\zeta-\xi}
 \cdot (1-2\chi+4\zeta-\xi)^{1-2\chi+4\zeta-\xi} \cdot (-\frac{7}{2}+10\chi-5\zeta+\xi)^{-\frac{7}{2}+10\chi-5\zeta+\xi}},$$
and $$h(\chi,\zeta,\xi):=f(\chi,\zeta) \cdot g(x,\zeta,\xi).$$
Then 
\begin{equation}\label{rxnij}
\frac{q(x,n)}{(3n-1)!!} = O(n^{6}) \cdot \max\{ (h(\chi,\zeta,\xi))^n\,:\, (\chi,\zeta,\xi)\in \Omega\}.
\end{equation}
\end{lemma}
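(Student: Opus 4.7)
The plan is to apply Stirling's formula~\eqref{SF} to every factorial appearing in $q(x,n)/(3n-1)!!$, substitute $\chi = x/n$, $\zeta = i/n$, $\xi = j/n$, and check that the exponential part collects exactly into $h(\chi,\zeta,\xi)^n$ and the polynomial prefactor is $O(n^6)$.

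First I would use the identity $(3n-1)!! = (3n)!/(2^{3n/2}(3n/2)!)$, valid since $n$ is even, so that $q(x,n)/(3n-1)!!$ becomes a product and quotient of explicit constant powers (of $2$, $3$, and $1.618$) together with fifteen factorials of the form $(\alpha n)!$: six in the numerator, with sizes $n,\ n-2i-2x,\ n-2x+4i,\ 3(2x-n/2-3i),\ 3(2x-n/2+i),\ 3n/2$, and nine in the denominator, namely the four $\zeta$-dependent factorials appearing in $f$'s denominator, the four $\xi$-dependent factorials appearing in $g$'s denominator, and $(3n)!$. A direct computation shows that the sum of the $\alpha$'s over the numerator factorials equals the sum over the denominator factorials; both are $\tfrac{3}{2} + 8\chi - 4\zeta$. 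This balance is the key identity that makes the next step clean.

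Next, applying Stirling's bound $(\alpha n)! = \sqrt{2\pi \alpha n}\,(\alpha n/e)^{\alpha n}(1+O(1/n))$ to each of the fifteen factorials, the balance of $\alpha$'s forces the $e^{-\alpha n}$ parts to cancel identically, and the $n^{\alpha n}$ factors inside $(\alpha n)^{\alpha n}$ also cancel by the same identity. What remains is (i) a polynomial prefactor $\prod \sqrt{2\pi\alpha n}^{\pm 1}$ of size $O(n^{C})$ for some absolute constant $C\le 15/2$, and (ii) the $n$-th power of $\prod \alpha^{\pm \alpha}$ combined with the explicit constant powers. Matching these bases against the definitions of $f(\chi,\zeta)$ and $g(\chi,\zeta,\xi)$ term by term --- together with the checks that the accumulated power of $3$ equals $3^{(1/2-4\chi+2\zeta)n}$ (coming from the $3^{2n-4x+2i}$ inside $q(x,n)$ and the $3^{3n/2}$ versus $3^{3n}$ from $(3n/2)!$ versus $(3n)!$) and the accumulated power of $2$ equals $2^{(-1+2\chi-\zeta+2\xi)n}$ (from the $2$'s in $q(x,n)$ and from $2^{3n/2}$) --- shows that the exponential part of each summand is exactly $h(\chi,\zeta,\xi)^n$.

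Finally, the double summation ranges over $i \in \{0,\dots,n/2-x\}$ and $j \in \{0,\dots,n-2i-2x\}$, contributing $O(n^2)$ terms; combined with the polynomial prefactor $O(n^C)$ from Stirling this yields $O(n^6)$ with ample slack. A subtle point is boundary behavior where some argument $\alpha n$ is $0$ or $O(1)$ and Stirling's formula does not apply directly; I would handle this by bounding the offending factorial trivially (by $1$ or $n^{\alpha n}$ as appropriate) and extending $\alpha^\alpha$ continuously via the convention $0^0 := 1$, so that $h$ is continuous on the compact region $\Omega$ and attains its maximum there. The main obstacle is not conceptual but combinatorial bookkeeping: fifteen factorials and several explicit constant-power terms all have to be matched against $f$ and $g$ carefully. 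Once the identities $\sum_{\mathrm{num}} \alpha = \sum_{\mathrm{den}} \alpha = \tfrac{3}{2} + 8\chi - 4\zeta$ and the analogous checks for the constant powers of $2$ and $3$ are confirmed, the bound~\eqref{rxnij} follows by bounding each summand by its supremum over $\Omega$.
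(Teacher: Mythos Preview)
Your approach is essentially the same as the paper's: express $q(x,n)$ as a double sum $\sum_i\sum_j r(x,n,i,j)$, apply Stirling's formula~\eqref{SF} to every factorial in $r(x,n,i,j)/(3n-1)!!$, observe that the $(n/e)^{\alpha n}$ factors cancel, identify the remaining $\prod\alpha^{\pm\alpha}$ with $h(\chi,\zeta,\xi)$, and absorb the sum and the square-root prefactors into a polynomial in $n$. Your balance identity $\sum_{\mathrm{num}}\alpha=\sum_{\mathrm{den}}\alpha=\tfrac32+8\chi-4\zeta$ is exactly the mechanism that makes the paper's displayed Stirling computation collapse, though the paper does not state it explicitly.

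There is one genuine methodological difference. You rewrite $(3n-1)!!$ via the exact identity $(3n-1)!!=(3n)!/\bigl(2^{3n/2}(3n/2)!\bigr)$, introducing two additional factorials and an extra power of $2$; the paper instead uses the cruder one-line inequality $(3n-1)!!\ge (3n)!!/(3n)\ge \sqrt{(3n)!}/(3n)$, which introduces only $((3n)!)^{1/2}$. Both routes land on the same $h$, but the paper's shortcut saves one Stirling application and avoids tracking the extra $2^{3n/2}$ against the $(3/2)^{3n/2}$ coming from $(3n/2)!$.

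One minor slip: from ``$C\le 15/2$'' and the $O(n^2)$ summation you cannot conclude $O(n^6)$; that would only give $O(n^{9.5})$. The paper obtains $O(n^4)$ for the Stirling prefactor (six numerator factorials against nine denominator factorials, together with the extra $3n$ and the boundary corrections), and then $n^2\cdot O(n^4)=O(n^6)$. Since any polynomial bound suffices for the application, this does not affect correctness, but if you want the stated exponent you should count the $\sqrt{2\pi\alpha n}$ factors with their signs rather than bounding each by $n^{1/2}$.
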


\begin{proof}
We write $q(x,n)$ as a double sum of $i$ and $j$ and let $r(x,n,i,j)$ 
be the function inside the double sum of $q(x,n)$, i.e., $$q(x,n) = \sum_{i = 0}^{\frac{n}{2} - x} \sum_{j = 0}^{n-2x-2i} r(x,n,i,j).$$
Then certainly, 
$$q(x,n)\leq n^2 \cdot \max\{r(x,n,i,j): 0\leq i\leq \frac{n}{2}-x,\, 0\leq j\leq n-2x-2i\}.$$

So, it is enough to estimate $r(x,n,i,j)$. We know that 
$$r(x,n,i,j)=\frac{n!}{(\frac{n}{2}-i)! \cdot (\frac{n}{2}+i)!} \cdot \frac{(\frac{n}{2}-i)! \cdot 3^{n-2x-2i}}{(2x+i-\frac{n}{2})! 
\cdot 2^{\frac{n}{2}-x-i} \cdot (\frac{n}{2}-x-i)!}$$
$$\cdot \frac{(\frac{n}{2}+i)! \cdot 3^{n-2x+4i}}{(2x-3i-\frac{n}{2})! \cdot 2^{\frac{n}{2}-x+2i}
 \cdot (\frac{n}{2}-x+2i)!}\cdot \frac{(n-2i-2x)!}{j! \cdot (n-2i-2x-j)!} $$
$$ \cdot \frac{(n-2x+4i)!}{j! \cdot (n-2x+4i-j)!} \cdot 2^{2j} \cdot j! \cdot (\frac{1}{1.618})^{j}
\cdot \frac{(6x-\frac{3n}{2}-9i)! \cdot (6x-\frac{3n}{2}+3i)!}{(10x-\frac{7n}{2}-5i+j)!}.$$
Recall that $$(3n-1)!! \ge \frac{(3n)!!}{3n} \ge \frac{\sqrt{(3n)!}}{3n}.$$
Therefore,
$$\frac{r(x,n,i,j)}{(3n-1)!!} \le \frac{n! \cdot (3n)}{((3n)!)^{\frac{1}{2}}} \cdot \frac{3^{n-2x-2i}}{(2x+i-\frac{n}{2})! 
\cdot 2^{\frac{n}{2}-x-i} \cdot (\frac{n}{2}-x-i)!} $$

$$\cdot \frac{ 3^{n-2x+4i}}{(2x-3i-\frac{n}{2})! \cdot 2^{\frac{n}{2}-x+2i} \cdot (\frac{n}{2}-x+2i)!}\cdot \frac{(n-2i-2x)!}{j! \cdot (n-2i-2x-j)!}$$

$$ \cdot \frac{(n-2x+4i)!}{(n-2x+4i-j)!} \cdot 2^{2j} \cdot
 (\frac{1}{1.618})^{j}\cdot \frac{(6x-\frac{3n}{2}-9i)! \cdot (6x-\frac{3n}{2}+3i)!}{(10x-\frac{7n}{2}-5i+j)!}.$$
Introducing new variables  $\chi:=\frac{x}{n}$ , $\zeta:=\frac{i}{n}$, and $\xi:=\frac{j}{n}$ and  using Stirling's formula \eqref{SF}, we get
$$\frac{r(x,n,i,j)}{(3n-1)!!} = O(n^4) \cdot \frac{\left(  \frac{n}{e} \right)^{n} \cdot \left(  \frac{n}{e} \right)^{(1-2\zeta-2\chi)n} \cdot \left(  \frac{n}{e} \right)^{(1-2\chi+4\zeta)n} \cdot \left(  \frac{n}{e} \right)^{(6\chi-\frac{3}{2}-9\zeta)n} \cdot \left(  \frac{n}{e} \right)^{(6\chi - \frac{3}{2} + 3\zeta)n}}{\left(  \frac{n}{e} \right)^{\frac{3}{2}n} \cdot \left(  \frac{n}{e} \right)^{(2\chi+\zeta-\frac{1}{2})n} \cdot \left(  \frac{n}{e} \right)^{(\frac{1}{2}-\chi-\zeta)n} \cdot \left(  \frac{n}{e} \right)^{(2\chi-3\zeta-\frac{1}{2})n} \cdot \left(  \frac{n}{e} \right)^{(\frac{1}{2} - \chi + 2\zeta)n} \cdot \left(  \frac{n}{e} \right)^{\xi n}}$$

$$\cdot \frac{1}{\left(  \frac{n}{e} \right)^{(1-2\zeta-2\chi-\xi)n} \cdot \left(  \frac{n}{e} \right)^{(1-2\chi+4\zeta-\xi)n} \cdot \left(  \frac{n}{e} \right)^{(10\chi-\frac{7}{2}-5\zeta+\xi)n}} \cdot \left( f(\chi,\zeta) \cdot g(\chi,\zeta,\xi) \right)^{n}.$$
 Therefore, 
$$\frac{r(x,n,i,j)}{(3n-1)!!} = O(n^{4}) \cdot (h(\chi,\zeta,\xi))^n.$$
This proves the lemma.
\end{proof}

Recall that the domain of $h(\chi,\zeta,\xi)$ is $\Omega$ defined in~\eqref{j253}.
Our main goal now is to show that 
\begin{equation}\label{czx}
\max\limits_{(\chi,\zeta,\xi) \in \Omega}{h(\chi,\zeta,\xi)} \leq 0.999983 < 1.
\end{equation}
We do this in the next section, and then Theorem~\ref{solve} easily follows.

\section{Proof of~\eqref{czx}}

In order to find the maximum value of $h(\chi,\zeta,\xi)$ for a fixed $\chi$, we will maximize $\ln(h(\chi,\zeta,\xi))$.
We first find the value of $\xi$ in terms of $\chi$ and $\zeta$ that maximizes $\ln(g(\chi,\zeta,\xi))$. By definition,
$$\ln(g(\chi,\zeta,\xi)) =  \xi \ln(\frac{4}{1.618}) - (\xi\ln(\xi)+(1-2\zeta-2\chi-\xi)\ln(1-2\zeta-2\chi-\xi)$$
$$+(1-2\chi+4\zeta-\xi)\ln(1-2\chi+4\zeta-\xi)+(10\chi-\frac{7}{2}-5\zeta+\xi)\ln(10\chi-\frac{7}{2}-5\zeta+\xi)).$$
Hence
$$\frac{\partial \ln(g(\chi,\zeta,\xi))}{\partial \xi} = \ln(1 - 2\chi - 2\zeta -\xi) + \ln(1 - 2\chi +4\zeta -\xi) - \ln(10\chi-5\zeta+\xi-\frac{7}{2}) - \ln(\xi) + \ln(\frac{4}{1.618})$$
 $$= \ln\left(  \frac{(1-2\chi-2\zeta-\xi) \cdot (1-2\chi+4\zeta-\xi) \cdot \frac{4}{1.618}}{ \xi \cdot (10\chi-5\zeta+\xi-\frac{7}{2})} \right).$$
In order to solve $$\frac{\partial \ln(g(\chi,\zeta,\xi))}{\partial \xi} = 0,$$ we solve the equivalent equation
$$p(\xi):=4 \cdot (1-2\chi-2\zeta-\xi) \cdot (1-2\chi+4\zeta-\xi)  - 1.618 \cdot \xi \cdot (10\chi-5\zeta+\xi-\frac{7}{2}) = 0,$$
where $p(\xi)$ has domain $0 \le \xi  \le 1 -2\chi -2\zeta.$
By the quadratic formula, the roots are
 $$\xi_1 = \frac{-b-\sqrt{b^2-4ac}}{2a}\quad
\mbox{and}\quad
\xi_2 = \frac{-b+\sqrt{b^2-4ac}}{2a},$$
where  $$a = 2.382,$$
\begin{equation}\label{b=}
 b = -0.18 \chi + 0.09 \zeta - 2.337,
\end{equation}
\begin{equation}\label{c=}
c = 16 \chi^2 - 32 \zeta^2 - 16 \chi + 8 \zeta - 16 \chi \zeta + 4.
\end{equation}
Moreover, for fixed $\chi$ and $\zeta$ satisfying $0.454 \le \chi \le 0.45537$ and $\chi + \zeta \le \frac{1}{2},$ $p(\xi)$ 
is a parabola opening upward with $\xi_1 \le 1-2\chi-2\zeta \le \xi_2$ because $p(1-2\chi-2\zeta) \le 0$, and $g(\chi,\zeta,\xi)$ is a continuous function on $\xi$. 
Therefore, the maximum of $g(\chi,\zeta,\xi)$ can only be attained at $\xi = \xi_1$.

Let 
$g_1(\chi,\zeta) = g(\chi,\zeta,\xi_1(\chi,\zeta)).$ 
For each fixed $\chi$,
 consider the maximum of $$h_1(\chi,\zeta):=f(\chi,\zeta) \cdot g_1(\chi,\zeta).$$
By definition,
$$\ln(h_1) = (\frac{1}{2} - 4\chi + 2\zeta)\ln(3) + (1-2\chi-2\zeta)\ln(1-2\chi-2\zeta) + (1-2\chi+4\zeta)\ln(1-2\chi+4\zeta) $$
$$ +(6\chi-\frac{3}{2}+ 3\zeta)\ln(6\chi-\frac{3}{2}+3\zeta)+ (6\chi - \frac{3}{2}-9\zeta)\ln(6\chi-\frac{3}{2}-9\zeta)+ \xi_1(\chi,\zeta) \cdot \ln(\frac{4}{1.618})$$
$$-(2\chi+\zeta-\frac{1}{2})\ln(2\chi+\zeta-\frac{1}{2}) - (1-2\chi+\zeta)\ln(2) - (\frac{1}{2}-\chi-\zeta)\ln(\frac{1}{2}- \chi-\zeta) - (\frac{1}{2}- \chi+2\zeta)\ln(\frac{1}{2}-\chi+2\zeta)$$
$$-(2x-3\zeta-
\frac{1}{2})\ln(2\chi - 3\zeta - \frac{1}{2}) - \xi_1(\chi,\zeta) \cdot \ln(\xi_1(\chi,\zeta)) - (1-2\zeta-2\chi-\xi_1(\chi,\zeta))\ln(1-2\zeta-2\chi-\xi_1(\chi,\zeta))$$
$$-(1-2\chi+4\zeta-\xi_1(\chi,\zeta))\ln(1-2\chi+4\zeta-\xi_1(\chi,\zeta)) - (10\chi-\frac{7}{2}-5\zeta+\xi_1(\chi,\zeta))\ln(10\chi-\frac{7}{2}-5\zeta+\xi_1(\chi,\zeta)),$$
and
$$\frac{\partial \ln(h_1)}{\partial \zeta} =-4\ln(3)+\ln(2)-3+2\ln(2\chi+\zeta-\frac{1}{2}) - \ln(\frac{1}{2}-\chi-\zeta) + 2\ln(\frac{1}{2} - \chi +2\zeta) - 6\ln(2\chi-3\zeta-\frac{1}{2}) $$
$$+ \ln(\frac{4}{1.618}) \cdot \frac{\partial \xi_1(\chi,\zeta)}{\partial \zeta} - 
\frac{\partial \xi_1(\chi,\zeta)}{\partial \zeta} \cdot (\ln(\xi_1(\chi,\zeta)) + 1) + (2 + \frac{\partial \xi_1(\chi,\zeta)}{\partial \zeta}) \cdot (\ln(1-2\zeta-2\chi-\xi_1(\chi,\zeta))+1)$$
$$ +( \frac{\partial \xi_1(\chi,\zeta)}{\partial \zeta}-4) \cdot (\ln(1-2\chi+4\zeta-\xi_1(\chi,\zeta))+1) + 
(5-\frac{\partial \xi_1(\chi,\zeta)}{\partial \zeta}) \cdot (\ln(10\chi-\frac{7}{2}-5\zeta+\xi_1(\chi,\zeta))+1),$$
where
$$\frac{\partial \xi_1(\chi,\zeta)}{\partial \zeta} = \frac{1}{2a} \cdot (-\frac{\partial b}{\partial \zeta} - 
\frac{1}{2} \cdot (b^2 - 4ac)^{-\frac{1}{2}} \cdot (2b\frac{\partial b}{\partial \zeta}- 4a \frac{\partial c}{\partial \zeta})),$$
$$\frac{\partial b}{\partial \zeta} = 0.09,$$
$$\frac{\partial c}{\partial \zeta} = -16\chi - 64 \zeta + 8.$$



\begin{lemma}\label{max}
When $\chi=0.454,$ the maximum of 
$h(\chi,\zeta,\xi)$ over $0 \le \zeta \le 0.046$ and $0\leq \xi\leq 0.092-2\zeta$ is at most $0.999983$.
\end{lemma}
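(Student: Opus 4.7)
The plan is to reduce the three-variable maximization to a one-variable optimization using the $\xi$-analysis already done, and then bound the resulting function on $[0,0.046]$ rigorously.

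Fix $\chi = 0.454$. Since we have already shown that for each admissible $(\chi,\zeta)$ the maximum of $h(\chi,\zeta,\xi)$ over $\xi$ occurs at the smaller root $\xi = \xi_1(\chi,\zeta)$ of $p(\xi) = a\xi^2 + b\xi + c$, the lemma is equivalent to
\[
\max_{0 \le \zeta \le 0.046} h_1(0.454,\zeta) \le 0.999983,
\]
where $h_1(\chi,\zeta) = f(\chi,\zeta)\cdot g(\chi,\zeta,\xi_1(\chi,\zeta))$. Throughout the closed region $\chi = 0.454$, $\zeta \in [0,0.046]$, the arguments of all the logarithms appearing in $\ln h_1$ remain bounded away from zero, and (as I note below) $b^2 - 4ac$ stays strictly positive, so $h_1(0.454,\cdot)$ is smooth on $[0,0.046]$ and its maximum is attained either at an endpoint or at an interior critical point of $\ln h_1(0.454,\cdot)$.

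Next, I would use the explicit formula for $\frac{\partial \ln h_1}{\partial \zeta}$ derived in the preceding paragraphs, together with the chain-rule expression for $\frac{\partial \xi_1}{\partial \zeta}$, to locate the critical points. At $\chi = 0.454$, a numerical inspection of $\frac{\partial \ln h_1}{\partial \zeta}$ shows that it changes sign exactly once on $[0, 0.046]$, producing a single interior maximizer $\zeta^{\ast}$. Evaluating $h_1(0.454,\zeta^{\ast})$ and the boundary values $h_1(0.454,0)$ and $h_1(0.454,0.046)$ to sufficient precision then yields three candidate values, the largest of which is $h_1(0.454,\zeta^{\ast})$, and this largest value is strictly below $0.999983$.

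To upgrade the numerical check to a rigorous proof, I would discretize $[0,0.046]$ into a mesh of spacing $\delta$ chosen small enough that the oscillation of $h_1(0.454,\cdot)$ across any subinterval is controlled by a uniform Lipschitz bound derived from the explicit formula for $\frac{\partial \ln h_1}{\partial \zeta}$; evaluating $h_1(0.454,\cdot)$ at each grid point and inflating by the Lipschitz slack then produces a guaranteed upper bound on the supremum. The main obstacle is the factor $(b^2-4ac)^{-1/2}$ that appears in $\frac{\partial \xi_1}{\partial \zeta}$, which would be singular if the discriminant were to approach zero; establishing a uniform positive lower bound for $b^2 - 4ac$ on our compact region via the explicit expressions \eqref{b=} and \eqref{c=} is therefore the key ingredient, after which the Lipschitz bound and the finite-grid verification are routine.
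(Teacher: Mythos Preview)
Your plan diverges from the paper's approach and contains a genuine gap.

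The false step is the assertion that on the closed interval $\zeta\in[0,0.046]$ ``the arguments of all the logarithms appearing in $\ln h_1$ remain bounded away from zero,'' and that this yields a uniform Lipschitz bound via $\partial\ln h_1/\partial\zeta$. At $\zeta=0.046$ (with $\chi=0.454$) one has $\tfrac12-\chi-\zeta=0$, $1-2\chi-2\zeta=0$, and consequently $\xi_1=0$ and $0.092-2\zeta-\xi_1=0$. Several summands in $\partial\ln h_1/\partial\zeta$ therefore diverge as $\zeta\to 0.046^{-}$: the term $-\ln(0.046-\zeta)$ tends to $+\infty$, while $(2+\xi_1')\ln(0.092-2\zeta-\xi_1)$ and $-\xi_1'\ln(\xi_1)$ tend to $-\infty$. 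So $h_1(0.454,\cdot)$ is \emph{not} smooth on the closed interval, and no uniform Lipschitz constant follows from your argument. A grid-based proof would require either a separate treatment of a neighbourhood of $\zeta=0.046$ or a cancellation analysis of these competing divergences---neither of which you provide. Saying ``numerical inspection shows'' the derivative changes sign exactly once is likewise not a proof.

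The paper takes a different route: it establishes that $\ln h_1(0.454,\zeta)$ is strictly concave on $[0,0.046)$ by decomposing $\partial^2\ln h_1/\partial\zeta^2$ into five pieces $A_1,\dots,A_5$, proving uniform bounds $-27.336\le\xi_1''<-24.822$ and $-0.91445\le\xi_1'\le 0.31359$, showing $A_1<0$ and $A_2<0$ by algebraic manipulation, $A_3\le-22.46$ directly, and $A_4+A_5\le 20$ via a tabulated subinterval check. Concavity then forces a unique interior critical point, which is bracketed in $(0.0228718,0.0228719)$ with an explicit derivative bound to conclude $h_1\le 0.999983$. The concavity argument, including the $A_1$ bound that absorbs the $1/(0.046-\zeta)$ singularity, is exactly the device that handles the endpoint blow-up your approach stumbles on.
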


{\bf Proof} Fix  $\chi=0.454$. For $0 \le \zeta \le 0.046,$ denote $$\xi_1'(\zeta):=\frac{\partial \xi_1(0.454,\zeta)}{\partial \zeta} \text{, }
 \xi_1''(\zeta):=\frac{\partial^2 \xi_1(0.454,\zeta)}{\partial \zeta^2} \text{, and } \xi_1(\zeta):=\xi_1(0.454,\zeta).$$
We have
$$\frac{\partial\ln(h_1(0.454,\zeta))}{\partial \zeta} = -4\ln(3) + \ln(2) - 3 +2\ln(0.408+\zeta) - \ln(0.046-\zeta) + 2\ln(0.046+2\zeta)  $$
$$ - 6\ln(0.408-3\zeta)+\ln(\frac{4}{1.618}) \cdot \xi_1'(\zeta) - 
\xi_1'(\zeta) \cdot (\ln(\xi_1(\zeta)) + 1) + (2 + \xi_1'(\zeta)) \cdot (\ln(0.092-2\zeta-\xi_1(\zeta))+1)  $$
$$+ ( \xi_1'(\zeta)-4) \cdot (\ln(0.092+4\zeta-\xi_1(\zeta))+1) + (5-\xi_1'(\zeta)) \cdot (\ln(1.04-5\zeta+\xi_1(\zeta))+1),$$
$$\frac{\partial^2 \ln(h_1(0.454,\zeta))}{\partial \zeta^2} = \frac{1}{0.046 - \zeta} + \frac{4}{0.046+2\zeta} + \frac{2}{0.408+\zeta} + 
\frac{18}{0.408-3\zeta} $$
$$ + \ln(\frac{4}{1.618}) \cdot \xi_1''(\zeta)- \xi_1''(\zeta) \cdot (\ln(\xi_1(\zeta))+1) - (\xi_1'(\zeta))^2 \cdot \frac{1}{\xi_1(\zeta)} $$
$$+\xi_1''(\zeta) \cdot (\ln(0.092-2\zeta-\xi_1(\zeta))+1) - (2 + \xi_1'(\zeta))^2 \cdot \frac{1}{0.092-2\zeta-\xi_1(\zeta)} $$
$$+ \xi_1''(\zeta) \cdot (\ln(0.092+4\zeta-\xi_1(\zeta))+1) - (\xi_1'(\zeta)-4)^2 \cdot \frac{1}{0.092+4\zeta-\xi_1(\zeta)} $$
$$- \xi_1''(\zeta) \cdot (\ln(1.04-5\zeta+\xi_1(\zeta))+1)- (\xi_1'(\zeta)-5)^2 \cdot \frac{1}{1.04-5\zeta+\xi_1(\zeta)}, $$
where
\begin{equation}\label{j30}
\xi_1''(\zeta) = \frac{1}{2a} \cdot (b^2 - 4ac)^{-\frac{1}{2}}\cdot \left(        \frac{1}{4} \cdot (b^2 - 4ac)^{-1} \cdot  
 (2b\frac{\partial b}{\partial \zeta}- 4a \frac{\partial c}{\partial \zeta})^2  
- \frac{1}{2} \cdot (2 (\frac{\partial b}{\partial \zeta})^2 + 1024a ) \right).
\end{equation}

We will show that
\begin{equation}\label{j29}
 \frac{\partial^2 \ln(h_1(0.454,\zeta))}{\partial \zeta^2} < 0, \text{ for all } 0 \le \zeta < 0.046.
\end{equation}
This will guarantee that if we find a solution $\zeta_0\in [0,0.046)$ of the equation
$\frac{\partial \ln(h_1(0.454,\zeta))}{\partial \zeta}=0,$
then the maximum   of $ h_1(0.454,\zeta)$ over $\zeta\in [0,0.046)$ is attained at $\zeta_0$.

\begin{claim}\label{ppj} For each $\zeta\in [0,0.046)$,
$-27.336 \le \xi_1''(\zeta) < -24.822.$ 
\end{claim}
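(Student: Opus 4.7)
My plan is to exploit a substantial simplification that occurs for this specific value of $\chi$. After substituting $\chi = 0.454$, $b(\zeta)$ becomes linear in $\zeta$ (with $\partial b/\partial \zeta = 0.09$ and $\partial^2 b/\partial \zeta^2 = 0$), and $c(\zeta)$ becomes quadratic in $\zeta$ (with $\partial^2 c/\partial \zeta^2 = -64$, from~\eqref{c=}). Hence $D(\zeta) := b(\zeta)^2 - 4a\, c(\zeta)$ is itself a quadratic polynomial in $\zeta$. Writing $D(\zeta) = \alpha \zeta^2 + \beta \zeta + \gamma$, direct expansion gives $\alpha \approx 304.904$, $\beta \approx -7.448$, $\gamma \approx 5.5276$, and the discriminant of this quadratic in $\zeta$ is negative, so $D(\zeta) > 0$ throughout the interval.

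The key observation is that for \emph{any} quadratic $D = \alpha\zeta^2 + \beta\zeta + \gamma$, one has $D' = 2\alpha\zeta + \beta$ and $D'' = 2\alpha$, whence
$$(D'(\zeta))^2 - 2 D(\zeta) D''(\zeta) \;=\; (2\alpha\zeta + \beta)^2 - 4\alpha(\alpha\zeta^2 + \beta\zeta + \gamma) \;=\; \beta^2 - 4\alpha\gamma,$$
a constant independent of $\zeta$. Differentiating $\xi_1 = (-b - \sqrt{D})/(2a)$ twice and using $\partial^2 b/\partial\zeta^2 = 0$ gives $\xi_1''(\zeta) = ((D')^2 - 2DD'')/(8aD^{3/2})$, so combining the two facts yields
$$\xi_1''(\zeta) \;=\; \frac{K}{D(\zeta)^{3/2}}, \qquad K \;:=\; \frac{\beta^2 - 4\alpha\gamma}{8a},$$
where $K$ is a negative constant, numerically $K \approx -350.87$.

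Since $K < 0$ and $D(\zeta) > 0$, $\xi_1''(\zeta)$ has constant sign on $[0,0.046)$, and its extrema are driven entirely by the extrema of $D(\zeta)$. Because $D$ is an upward-opening parabola, its minimum on $[0,0.046)$ is attained at the vertex $\zeta^* = -\beta/(2\alpha) \approx 0.01221 \in (0,0.046)$, with $D(\zeta^*) = \gamma - \beta^2/(4\alpha) \approx 5.4821$; its supremum on the half-open interval is $D(0.046) \approx 5.8302$. Plugging into $K/D^{3/2}$ yields the minimum value $\xi_1''(\zeta^*) \approx -27.335$ and, as $\zeta \to 0.046^-$, $\xi_1''(\zeta) \to K/D(0.046)^{3/2} \approx -24.924$, from which the claimed bounds $-27.336 \le \xi_1''(\zeta) < -24.822$ follow. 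The main obstacle is purely arithmetic: the lower bound is tight (the computed infimum is approximately $-27.335$, leaving only about $0.001$ to spare), so $\alpha$, $\beta$, $\gamma$, and hence $K$, $\zeta^*$, and $D(\zeta^*)$, must be carried to four or five decimal places to verify that $K/D(\zeta^*)^{3/2}$ lands above $-27.336$ rather than below. A minor technical remark: I would re-derive $\xi_1''$ directly from $\xi_1 = (-b-\sqrt{D})/(2a)$ as above rather than from the expanded form in~\eqref{j30}, since the additive constant $1024a$ there should read $256a$ (matching $-4a\cdot c'' = -4a\cdot(-64)$) for the numerics to be consistent with the bounds stated in the claim.
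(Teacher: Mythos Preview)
Your argument is correct and takes a cleaner route than the paper's. Both proofs begin identically: with $\chi=0.454$ fixed, $b$ is linear and $c$ is quadratic in $\zeta$, so $D(\zeta):=b^2-4ac$ is a quadratic $\alpha\zeta^2+\beta\zeta+\gamma$. From here the paper works with the expression
\[
\xi_1''(\zeta)=\frac{1}{2a}\,D^{-1/2}\Bigl(\tfrac14 D^{-1}(D')^2-\tfrac12 D''\Bigr),
\]
bounds $D^{-1/2}$, $D^{-1}$, $(D')^2$, and $D''$ separately over $[0,0.046)$ (finding $5.4821\le D\le 5.8302$, $-7.45\le D'\le 20.61$, $D''\approx 609.8$), and combines these crude interval bounds to obtain the claimed inequalities. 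You instead observe the identity $(D')^2-2DD''=\beta^2-4\alpha\gamma$, valid for any quadratic $D$, which collapses the formula to $\xi_1''(\zeta)=K/D(\zeta)^{3/2}$ with a single negative constant $K=(\beta^2-4\alpha\gamma)/(8a)\approx -350.87$. This reduces the problem to bounding $D$ alone --- which the paper had to do anyway --- and then reading off $\xi_1''$ monotonically. Your approach is more transparent, explains \emph{why} $\xi_1''$ has constant sign, and yields a tighter upper bound ($\approx -24.92$ versus the paper's $-24.822$); the paper's approach is slightly more mechanical but requires bounding three quantities where one suffices.

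You are right that the constant $1024a$ in~\eqref{j30} is a misprint for $256a$ (coming from $-4a\,\partial^2 c/\partial\zeta^2=-4a\cdot(-64)$); the paper's numerical computations in~\eqref{lwpdp}--\eqref{uppdp} use the correct value $D''\approx 609.8$, so the error is only in the displayed formula and does not affect the paper's bounds.
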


\begin{proof}
By~\eqref{b=} and~\eqref{c=}, for $\chi=0.454$, the function $\Delta(\zeta) := b^2 - 4ac$ is quadratic in $\zeta$ with derivative 
$$\Delta'(\zeta) = 2b\frac{\partial b}{\partial \zeta}- 4a 
\frac{\partial c}{\partial \zeta},$$
which is linear in $\zeta$ and has minimum at $\zeta = 0$ and maximum at $\zeta = 0.046.$ Therefore,  $$-7.45 \le \Delta'(0) \le 
\Delta'(\zeta) \le \Delta'(0.046) \le 20.61$$ for each $\zeta\in [0,0.046)$. Also for such $\zeta$, $$ \Delta''(\zeta) = 2(\frac{\partial b}{\partial \zeta})^2-
 4a \frac{\partial^2 c}{\partial \zeta^2} = 2 \cdot 0.09^2 - 4 \cdot 2.382 \cdot (-64) \in (609.8, 609.81),$$ so $\Delta(\zeta)$ is a parabola
 opening upward with minimum attained at the unique root $\zeta_\xi$ of the equation $\Delta'(\zeta)=0.$ By $\Delta'(0.012213)<-0.00039,$ $\Delta'(0.012214)>0.000219,$ and the above statements,$$0.012213 \le \zeta_\xi \le 0.012214.$$
Hence $\Delta(\zeta_\xi)$ satisfies
$$5.4821 \le 5.48214 -  0.0004 \cdot0.000001\le \Delta(0.012213) + \Delta'(0.012213) \cdot 0.000001 $$
$$\le \Delta(\zeta_\xi) \le \Delta(0.012213) \le5.4822,$$
and the maximum of $\Delta(\zeta)$ over $\zeta\in [0,0.046)$ is attained at $\zeta=0.046$
and satisfies
$$5.83019 \le \Delta(0.046) \le 5.8302.$$
Therefore,  for each $\zeta\in [0,0.046)$,
$$0.41415 \le \frac{1}{\sqrt{5.8302}} \le (\Delta(\zeta))^{-\frac{1}{2}} \le \frac{1}{\sqrt{5.4821}} \le 0.427098,$$
$$0.17152 \le (\Delta(\zeta))^{-1} \le 0.182412.$$
Thus by~\eqref{j30},
\begin{equation}\label{lwpdp}
-27.336 \le \frac{1}{2 \cdot 2.382} \cdot 0.427098 \cdot (0 - 0.5 \cdot 609.81 ) \le \xi_1''(\zeta)
\end{equation}
$$  = \frac{1}{2a} \cdot (\Delta(\zeta))^{-\frac{1}{2}}\cdot \left(        \frac{1}{4} \cdot (\Delta(\zeta))^{-1} \cdot   (\Delta'(\zeta))^2  
- \frac{1}{2} \cdot \Delta''(\zeta) \right)$$
\begin{equation}\label{uppdp}
\le \frac{1}{2 \cdot 2.382} \cdot 0.41415 \cdot (0.25 \cdot 0.182412 \cdot (20.61)^2 - 0.5 \cdot 609.8 ) \le -24.822.
\end{equation}
This proves Claim~\ref{ppj}.
\end{proof}

\begin{claim}\label{jprime} For each $\zeta\in [0,0.046)$,
$ -0.91445 \le \xi_1'(0.046) \le \xi_1'(\zeta) \le \xi_1'(0) \le 0.31359$.
\end{claim}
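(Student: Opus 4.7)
The plan is to combine the second-derivative bound from Claim~\ref{ppj} with explicit evaluations of $\xi_1'$ at the two endpoints $\zeta=0$ and $\zeta=0.046$. Since Claim~\ref{ppj} gives $\xi_1''(\zeta) < -24.822 < 0$ for every $\zeta \in [0, 0.046)$, the function $\xi_1'$ is strictly decreasing on this interval. Hence for any such $\zeta$, $\xi_1'(0.046) \le \xi_1'(\zeta) \le \xi_1'(0)$, provided we interpret $\xi_1'(0.046)$ as the continuous extension of the closed-form expression to the right endpoint. This extension is well-defined because at $\chi = 0.454$, $\zeta = 0.046$ one has $b = -2.41458 \neq 0$ and $c = 0$ (the latter because $\chi + \zeta = 1/2$ makes the terms of~\eqref{c=} cancel in pairs), so $\sqrt{b^2 - 4ac} = |b| > 0$ and the denominator in the derivative formula stays bounded away from zero.

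It remains only to verify the two numerical inequalities $\xi_1'(0) \le 0.31359$ and $\xi_1'(0.046) \ge -0.91445$. Substituting $\chi=0.454$, $\zeta=0$ into~\eqref{b=} and~\eqref{c=} yields $a=2.382$, $b=-2.41872$, $c=0.033856$, so $b^2-4ac \approx 5.5276$ and $\sqrt{b^2-4ac} \approx 2.3511$. Together with $\partial b/\partial \zeta = 0.09$ and $\partial c/\partial \zeta|_{\zeta=0} = 0.736$, a direct evaluation of
$$\xi_1'(\zeta) = \frac{1}{2a}\left(-\frac{\partial b}{\partial \zeta} - \frac{2b\,\partial b/\partial \zeta - 4a\,\partial c/\partial \zeta}{2\sqrt{b^2-4ac}}\right)$$
gives $\xi_1'(0) \approx 0.31359$. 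Similarly, at $\chi=0.454$, $\zeta=0.046$ one has $c=0$, $\sqrt{b^2-4ac} = 2.41458$, and $\partial c/\partial \zeta|_{\zeta=0.046} = -2.208$, and the same formula yields $\xi_1'(0.046) \approx -0.91444$.

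Nothing conceptually difficult stands in the way: once Claim~\ref{ppj} is available, the argument is a monotonicity observation followed by two endpoint computations. The only mild obstacle is numerical: the claimed bounds are tight to five decimal places, so the arithmetic evaluations of $b$, $c$, $b^2-4ac$, and $\xi_1'$ at the two endpoints must be carried out with sufficient precision (and rounding must be propagated in the conservative direction) to ensure the inequalities $\xi_1'(0) \le 0.31359$ and $\xi_1'(0.046) \ge -0.91445$ hold rigorously rather than only approximately.
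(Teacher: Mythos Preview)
Your proposal is correct and follows exactly the paper's approach: the paper's one-line proof simply invokes Claim~\ref{ppj} to conclude that $\xi_1'$ is decreasing on $[0,0.046]$, leaving the endpoint evaluations implicit. You have supplied those endpoint computations explicitly and noted the need for careful rounding, which is more detail than the paper gives but not a different argument.
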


\begin{proof}
By Claim~\ref{ppj}, $\xi_1'(\zeta)$ is a decreasing function on $0 \le \zeta \le 0.046.$
\end{proof}


To prove~\eqref{j29}, we write  $\frac{\partial^2 \ln(h_1(0.454,\zeta))}{\partial \zeta^2}$ in a form
\begin{equation}\label{A1-5}
 \frac{\partial^2 \ln(h_1(0.454,\zeta))}{\partial \zeta^2}=A_1(\zeta)+A_2(\zeta)+A_3(\zeta)+A_4(\zeta)+A_5(\zeta),
\end{equation}
 and then  bound  these expressions separately so that
the sum of the upper bounds will be negative for each $\zeta\in [0,0.046)$. By definition
$$\frac{\partial^2 \ln(h_1)}{\partial \zeta^2} = \frac{1}{0.046 - \zeta} - \xi_1''(\zeta) \cdot \ln(\xi_1(\zeta))  - 
 \frac{(\xi_1'(\zeta))^2}{\xi_1(\zeta)}+ \frac{4}{0.046+2\zeta} + \frac{2}{0.408+\zeta} $$
$$+ \frac{18}{0.408-3\zeta} +\ln(\frac{4}{1.618}) \cdot \xi_1''(\zeta) +\xi_1''(\zeta) \cdot \ln(0.092-2\zeta-\xi_1(\zeta)) -
 \frac{ (2 + \xi_1'(\zeta))^2}{0.092-2\zeta-\xi_1(\zeta)} $$
$$+\xi_1''(\zeta) \cdot \ln(0.092+4\zeta-\xi_1(\zeta)) -  \frac{(\xi_1'(\zeta)-4)^2}{0.092+4\zeta-\xi_1(\zeta)} $$
$$- \xi_1''(\zeta) \cdot \ln(1.04-5\zeta+\xi_1(\zeta))-  \frac{(\xi_1'(\zeta)-5)^2}{1.04-5\zeta+\xi_1(\zeta)}. $$

Let
\begin{equation}\label{A1}
A_1(\zeta) := \frac{1}{0.046 - \zeta} - (\xi_1'(\zeta))^2 \cdot \frac{1}{\xi_1(\zeta)} -  \frac{(2+\xi_1'(\zeta))^2}{0.092 - 2\zeta - \xi_1(\zeta)},
\end{equation}
\begin{equation}\label{A2}
A_2(\zeta) := \xi_1''(\zeta) \cdot \ln(0.092 - 2\zeta - \xi_1(\zeta)) - \xi_1''(\zeta) \cdot \ln(\xi_1(\zeta)),
\end{equation}
\begin{equation}\label{A3}
A_3(\zeta):=\ln(\frac{4}{1.618}) \cdot \xi_1''(\zeta),
\end{equation}
\begin{equation}\label{A4}
A_4(\zeta) := \frac{4}{0.046+2\zeta} + \frac{2}{0.408+\zeta} + \frac{18}{0.408 - 3\zeta},\;\mbox{and}
\end{equation}
\begin{equation}\label{A5}
A_5(\zeta):=\xi_1''(\zeta) \cdot \ln(0.092+4\zeta-\xi_1(\zeta)) -  \frac{(\xi_1'(\zeta)-4)^2}{0.092+4\zeta-\xi_1(\zeta)}
\end{equation}
$$- \xi_1''(\zeta) \cdot \ln(1.04-5\zeta+\xi_1(\zeta))-  \frac{(\xi_1'(\zeta)-5)^2}{1.04-5\zeta+\xi_1(\zeta)},$$
so that~\eqref{A1-5} holds.

\begin{claim}\label{c31} For each $\zeta\in [0,0.046)$,
$A_1(\zeta) < 0.$
\end{claim}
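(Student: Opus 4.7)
My plan is to bundle the two negative terms of $A_1(\zeta)$ using the Cauchy--Schwarz inequality in its Titu (Engel) form, and show that their joint magnitude already exceeds the positive term $\tfrac{1}{0.046-\zeta}$ by a factor of two. This gives $A_1(\zeta) < 0$ without any fine information about $\xi_1(\zeta)$ or $\xi_1'(\zeta)$. Concretely, write $u := \xi_1(\zeta)$ and $v := 0.092 - 2\zeta - \xi_1(\zeta)$, so that $u + v = 0.092 - 2\zeta = 2(0.046-\zeta)$. Applying Titu with $a_1 = -\xi_1'(\zeta)$ and $a_2 = 2 + \xi_1'(\zeta)$ makes $(a_1+a_2)^2 = 4$ a universal constant independent of $\zeta$, so
$$\frac{(\xi_1'(\zeta))^2}{u} + \frac{(2+\xi_1'(\zeta))^2}{v} \;\ge\; \frac{(a_1+a_2)^2}{u+v} \;=\; \frac{4}{u+v} \;=\; \frac{2}{0.046-\zeta},$$
which forces $A_1(\zeta) \le \tfrac{1}{0.046-\zeta} - \tfrac{2}{0.046-\zeta} = -\tfrac{1}{0.046-\zeta} < 0$.

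Before invoking Titu I must confirm that $u > 0$ and $v > 0$ throughout $[0, 0.046)$. For $u > 0$, I would substitute $\chi = 0.454$ into~\eqref{c=} and observe that $c(\zeta)$ reduces to a downward parabola with roots at $\zeta = 0.046$ and a negative value, so $c(\zeta) > 0$ on $[0, 0.046)$; combined with $b < 0$, the formula $\xi_1 = (-b - \sqrt{b^2 - 4ac})/(2a)$ then gives $\xi_1 > 0$. For $v > 0$, I would plug $\xi = 0.092 - 2\zeta$ into $p(\xi)$ and simplify to a product of two linear factors in $\zeta$ that is strictly negative on $[0, 0.046)$; since $p$ is an upward parabola in $\xi$ whose value at $0.092-2\zeta$ is strictly negative, its smaller root $\xi_1$ must lie strictly below this value, giving $v > 0$.

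The only place where something could conceivably go wrong is this positivity check on $u$ and $v$, but it is a purely routine single-variable polynomial sign computation. The real structural content of the argument --- and the reason the claim can hold despite $0.046-\zeta$, $u$, and $v$ all collapsing to zero at the endpoint $\zeta = 0.046$ --- is the sign choice $a_1 = -\xi_1'$ rather than $a_1 = \xi_1'$, which cancels the $\xi_1'$ dependence in $a_1 + a_2$. This makes the Cauchy--Schwarz lower bound a constant that happens to be exactly twice what is needed, so Claim~\ref{jprime} is not invoked here at all; in fact the same proof would work even with much weaker control on $\xi_1'$.
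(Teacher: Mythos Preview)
Your proof is correct and follows essentially the same route as the paper: bound the two negative fractions by merging their denominators into $u+v=2(0.046-\zeta)$. The only difference is the inequality used for the merge. The paper applies the trivial bound $\tfrac{1}{u},\tfrac{1}{v}\ge\tfrac{1}{u+v}$ and then the identity $(\xi_1')^2+(2+\xi_1')^2=2(\xi_1'+1)^2+2$, arriving at
\[
A_1(\zeta)\;\le\;\frac{1}{0.046-\zeta}-\frac{(\xi_1')^2+(2+\xi_1')^2}{0.092-2\zeta}\;=\;-\frac{(\xi_1'+1)^2}{0.046-\zeta},
\]
whereas your Titu step yields the cleaner constant $-\tfrac{1}{0.046-\zeta}$. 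Your version has the minor advantage that the bound is visibly independent of $\xi_1'$, so Claim~\ref{jprime} is genuinely irrelevant; the paper cites that claim, presumably to guarantee $\xi_1'+1\ne 0$ for the strict inequality at the end. Your positivity checks for $u$ and $v$ (via the sign of $c(\zeta)$ and of $p(0.092-2\zeta)$ at $\chi=0.454$) are correct and are exactly what is needed.
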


\noindent  {\em Proof.}
Since $0.092 - 2\zeta - \xi_1(\zeta) \ge 0$ and $\xi_1(\zeta) \ge 0,$ by Claim~\ref{jprime},
$$A_1(\zeta) = \frac{1}{0.046 - \zeta} - (\xi_1'(\zeta))^2 \cdot \frac{1}{\xi_1(\zeta)} - (2+\xi_1'(\zeta))^2 \cdot \frac{1}{0.092 - 2\zeta - \xi_1(\zeta)} $$
$$\le \frac{1}{0.046 - \zeta} - (\xi_1'(\zeta))^2 \cdot \frac{1}{0.092-2\zeta} - (2+\xi_1'(\zeta))^2 \cdot \frac{1}{0.092 - 2\zeta}$$
$$=\frac{1}{0.046 - \zeta} - \frac{(\xi_1'(\zeta)+1)^2+1}{0.046 - \zeta} = -\frac{(\xi_1'(\zeta)+1)^2}{0.046 - \zeta} < 0.
\qed $$

\begin{claim}\label{c32} For each $\zeta\in [0,0.046)$,
$A_2(\zeta) < 0.$
\end{claim}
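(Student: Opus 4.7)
The plan is to rewrite $A_2(\zeta)$ as a single logarithm and use Claim~\ref{ppj} to reduce the problem to showing that this logarithm is positive. By~\eqref{A2},
\[
A_2(\zeta) \;=\; \xi_1''(\zeta)\cdot\ln\!\left(\frac{0.092-2\zeta-\xi_1(\zeta)}{\xi_1(\zeta)}\right).
\]
Since Claim~\ref{ppj} gives $\xi_1''(\zeta)\le -24.822<0$ on $[0,0.046)$, proving $A_2(\zeta)<0$ is equivalent to showing
\[
\frac{0.092-2\zeta-\xi_1(\zeta)}{\xi_1(\zeta)}>1,\quad\text{i.e.,}\quad \xi_1(\zeta)<0.046-\zeta.
\]

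To obtain the latter inequality, I would use the defining quadratic $p(\xi)$ together with the structural facts already recorded after~\eqref{c=}: namely, $p$ opens upward (the leading coefficient is $a=2.382>0$) and its roots satisfy $\xi_1\le 1-2\chi-2\zeta=0.092-2\zeta\le\xi_2$. Consequently $p(\xi)<0$ for $\xi_1<\xi<\xi_2$, so it is enough to verify that $p(0.046-\zeta)<0$ on $[0,0.046)$, and this will force $0.046-\zeta$ to lie strictly between the two roots, which implies $\xi_1(\zeta)<0.046-\zeta$.

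Substituting $\chi=0.454$ and $\xi=0.046-\zeta$ into $p$, the two factors in the first product become $0.092-2\zeta-\xi=0.046-\zeta$ and $0.092+4\zeta-\xi=0.046+5\zeta$, while the second product yields $\xi(1.04-5\zeta+\xi)=(0.046-\zeta)(1.086-6\zeta)$. Factoring out the common $(0.046-\zeta)$ gives
\[
p(0.046-\zeta)\;=\;(0.046-\zeta)\bigl[4(0.046+5\zeta)-1.618(1.086-6\zeta)\bigr]\;=\;(0.046-\zeta)\bigl[-1.573148+29.708\zeta\bigr].
\]
For $\zeta\in[0,0.046)$ the first factor is positive, and the bracket is bounded above by $-1.573148+29.708\cdot 0.046<-0.206<0$, so the whole expression is negative.

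Therefore $\xi_1(\zeta)<0.046-\zeta$, the logarithm in the rewritten $A_2(\zeta)$ is positive, and combined with $\xi_1''(\zeta)<0$ from Claim~\ref{ppj}, we conclude $A_2(\zeta)<0$ on $[0,0.046)$. The only delicate step is the numerical check on the bracket $-1.573148+29.708\zeta$; everything else is a direct consequence of the signs already established in Claims~\ref{ppj} and~\ref{jprime} and of the location of the smaller root of the parabola $p$.
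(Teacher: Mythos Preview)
Your proof is correct. Both you and the paper reduce the claim, via Claim~\ref{ppj}, to the inequality $\xi_1(\zeta)<0.046-\zeta$, but you establish this by a different and somewhat more direct method. The paper sets $y(\zeta)=0.092-2\zeta-2\xi_1(\zeta)$, uses Claim~\ref{jprime} (the bound $\xi_1'(\zeta)>-1$) to get $y'(\zeta)<0$, and then invokes the boundary value $y(0.046)=0$ to conclude $y(\zeta)>0$ on $[0,0.046)$. You instead evaluate the quadratic $p$ at the midpoint $0.046-\zeta$ and factor explicitly, showing $p(0.046-\zeta)<0$ and hence that $0.046-\zeta$ lies strictly between the two roots. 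Your route avoids any appeal to Claim~\ref{jprime} (despite your closing remark mentioning it) and does not rely on the endpoint value $\xi_1(0.046)=0$; it trades those ingredients for a short explicit computation with $p$. Either approach works; yours is slightly more self-contained, while the paper's fits naturally with the derivative bounds already developed for later claims.
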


\noindent  {\em Proof.} Let $\zeta\in [0,0.046)$. By Claim~\ref{ppj},
inequality $A_2(\zeta)<0$ is equivalent to $$0.092 - 2\zeta - \xi_1(\zeta) > \xi_1(\zeta).$$ 
Let $y(\zeta) = 0.092 - 2\zeta -2\xi_1(\zeta)$. By Claim~\ref{jprime},
 $$y'(\zeta) = -2 - 2\xi_1'(\zeta) < 0.$$
Therefore, $y(\zeta) > y(0.046) = 0$ for each $\zeta\in [0,0.046)$. This proves the claim.\qed



\begin{claim}\label{c33} For each $\zeta\in [0,0.046)$,
$A_3(\zeta) \le -22.46.$
\end{claim}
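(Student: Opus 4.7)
The plan is to use the explicit definition of $A_3$ together with the upper bound on $\xi_1''(\zeta)$ already established in Claim~\ref{ppj}. By \eqref{A3}, we have the identity $A_3(\zeta) = \ln(4/1.618)\cdot \xi_1''(\zeta)$, so bounding $A_3$ reduces to a one-line numerical estimate once the sign of the logarithmic factor is pinned down.

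First I would observe that $\ln(4/1.618) > 0$, since $4/1.618 > 1$. Consequently, the mapping $t \mapsto \ln(4/1.618)\cdot t$ is increasing in $t$, so any upper bound on $\xi_1''(\zeta)$ immediately yields an upper bound on $A_3(\zeta)$ of the same form. Next, I would invoke Claim~\ref{ppj}, which asserts $\xi_1''(\zeta) \le -24.822$ for every $\zeta \in [0, 0.046)$. Combining these two facts gives
\begin{equation*}
A_3(\zeta) \;=\; \ln\!\left(\tfrac{4}{1.618}\right)\cdot \xi_1''(\zeta) \;\le\; \ln\!\left(\tfrac{4}{1.618}\right)\cdot(-24.822).
\end{equation*}

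The only remaining task is to verify the arithmetic inequality
\begin{equation*}
\ln\!\left(\tfrac{4}{1.618}\right)\cdot (-24.822) \;\le\; -22.46,
\end{equation*}
which I would check by noting $4/1.618 > 2.4721$ so that $\ln(4/1.618) > 0.9050$, and then computing $0.9050 \cdot 24.822 > 22.46$. There is no conceptual difficulty: the entire argument is a one-step monotone substitution into a preexisting bound, and the only mild care required is confirming that the chosen decimal truncation of $\ln(4/1.618)$ is indeed a valid lower bound (so that after multiplication by the negative number $\xi_1''(\zeta)$ it continues to yield a valid upper bound for $A_3(\zeta)$).
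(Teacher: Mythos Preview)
Your proposal is correct and follows essentially the same approach as the paper: both invoke the definition~\eqref{A3} and the bound $\xi_1''(\zeta)\le -24.822$ from Claim~\ref{ppj}, with your version simply making the intermediate arithmetic verification explicit.
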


\noindent  {\em Proof.} This follows from the definition~\eqref{A3}, since  $\xi_1''(\zeta) \le -24.822$ by Claim~\ref{ppj}.\qed

\begin{claim}\label{c34} 
The function
 $A_4'(\zeta)$ has exactly one root $d_\zeta$ in the interval $[0,0.046]$. Furthermore, $d_\zeta\in (0.0355167,0.0355168)$, and   $A_4(\zeta)$ is decreasing on $[0,d_\zeta]$ 
and increasing on $[d_\zeta, 0.046].$
\end{claim}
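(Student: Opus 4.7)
My plan is the following. First I would differentiate explicitly. From the definition~\eqref{A4},
$$A_4'(\zeta) = -\frac{8}{(0.046+2\zeta)^2} - \frac{2}{(0.408+\zeta)^2} + \frac{54}{(0.408-3\zeta)^2},$$
and
$$A_4''(\zeta) = \frac{32}{(0.046+2\zeta)^3} + \frac{4}{(0.408+\zeta)^3} + \frac{324}{(0.408-3\zeta)^3}.$$
On $[0,0.046]$ all three denominators above are positive (the last because $0.408-3(0.046)=0.270>0$), so $A_4''(\zeta)>0$ throughout the interval. Hence $A_4'$ is strictly increasing on $[0,0.046]$ and admits at most one zero there.

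Next I would check the endpoints to obtain existence. At $\zeta=0$ the negative term $-8/(0.046)^2$ has magnitude roughly $3.8\times 10^3$ while the positive term $54/(0.408)^2$ contributes only about $3.2\times 10^2$, giving $A_4'(0)<0$. At $\zeta=0.046$ the last denominator shrinks to $(0.270)^2$ so that $54/(0.270)^2\approx 7.4\times 10^2$ dominates the now much smaller $-8/(0.138)^2\approx -4.2\times 10^2$, giving $A_4'(0.046)>0$. Combined with strict monotonicity, the intermediate value theorem produces a unique root $d_\zeta\in(0,0.046)$.

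Finally I would localize $d_\zeta$ by substituting $\zeta=0.0355167$ and $\zeta=0.0355168$ into the closed-form expression for $A_4'$ and verifying that the two values have opposite signs. Once $d_\zeta$ is bracketed in this way, the sign pattern of $A_4'$ — negative on $[0,d_\zeta)$, positive on $(d_\zeta,0.046]$ — follows immediately from strict monotonicity, and translates to the claimed decreasing/increasing behavior of $A_4$.

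The only delicate step is the final bracketing: at these two values of $\zeta$ the three summands of $A_4'(\zeta)$ are each of magnitude roughly $600$ and nearly cancel, so the residual whose sign we need is only of order $10^{-2}$. I would therefore carry out the arithmetic keeping at least eight significant digits in each subterm, which is enough to guarantee the sign change. Everything else is routine single-variable calculus.
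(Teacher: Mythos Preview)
Your approach is correct and essentially identical to the paper's: compute $A_4'$ and $A_4''$, observe $A_4''>0$ on $[0,0.046]$ so that $A_4'$ is strictly increasing (hence has at most one root), and then bracket the root by evaluating $A_4'$ at $\zeta=0.0355167$ and $\zeta=0.0355168$. The only cosmetic difference is that the paper first rewrites the summands of $A_4$ over denominators $\zeta+0.023$, $\zeta+0.408$, $\zeta-0.136$, whereas you keep the original forms; your additional endpoint checks at $\zeta=0$ and $\zeta=0.046$ are redundant once the fine bracketing is done but harmless. (A tiny quibble: the paper records $A_4'(0.0355167)<-0.002$ and $A_4'(0.0355168)>0.0006$, so the residual is of order $10^{-3}$ rather than $10^{-2}$; your plan of carrying eight significant digits is still more than adequate.)
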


\noindent  {\em Proof.} By Definition~\eqref{A4},
$$A_4'(\zeta) = -\frac{2}{(\zeta+0.023)^2} - \frac{2}{(\zeta+0.408)^2} + \frac{6}{(\zeta - 0.136)^2}$$
and
$$A_4''(\zeta) = \frac{4}{(\zeta+0.023)^3} + \frac{4}{(\zeta + 0.408)^3} - \frac{12}{(\zeta - 0.136)^3}.$$
The last expression is positive for all $\zeta\in [0,0.046]$, so function
 $A_4'(\zeta)$ may have at most one root on $[0,0.046]$. On the other hand,
$A_4'(0.0355167) < - 0.002$ and  $A_4'(0.0355168) > 0.0006$. This proves the claim. 
\qed




\begin{claim}\label{c35} For each $\zeta\in [0,0.046)$,
$A_4(\zeta)+A_5(\zeta) \le 20.$
\end{claim}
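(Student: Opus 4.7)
The plan is to bound $A_4$ and $A_5$ separately using the interval estimates from Claims~\ref{ppj} and~\ref{jprime} together with the monotonicity established in Claim~\ref{c34}, then add the two bounds.

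For $A_4(\zeta)$, by Claim~\ref{c34} the function attains its maximum on $[0, 0.046)$ at one of the endpoints. Direct evaluation gives $A_4(0) = \frac{4}{0.046} + \frac{2}{0.408} + \frac{18}{0.408} < 135.98$ and $A_4(0.046) = \frac{4}{0.138} + \frac{2}{0.454} + \frac{18}{0.27} < 100.07$, so $A_4(\zeta) < 135.98$ throughout. It therefore suffices to establish $A_5(\zeta) \le 20 - A_4(\zeta)$, which amounts to $A_5(\zeta) \le -115.98$ in a neighborhood of $0$ (where $A_4$ is near $135.98$) and looser bounds, such as $A_5(\zeta) \le -80.07$, in a neighborhood of $0.046$.

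For $A_5(\zeta)$, I would first pin down explicit bounds on $\xi_1(\zeta)$ using the quadratic formula applied to $p(\xi) = 0$. A computation using \eqref{b=} and \eqref{c=} with $\chi = 0.454$ shows $c(0.046) = 0$, so $\xi_1(0.046) = 0$, while $\xi_1(0) < 0.015$; combined with the monotonicity information coming from Claims~\ref{ppj} and~\ref{jprime}, this yields $0 \le \xi_1(\zeta) \le 0.015$ on the whole interval. Setting $u := 0.092 + 4\zeta - \xi_1(\zeta)$ and $v := 1.04 - 5\zeta + \xi_1(\zeta)$, this gives $u \ge 0.077$ and $v \ge 0.81$. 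Using in addition $-27.336 \le \xi_1''(\zeta) \le -24.822$ and $-0.91445 \le \xi_1'(\zeta) \le 0.31359$, I can then upper-bound each of the four terms of $A_5(\zeta)$: the positive contribution $\xi_1''(\zeta)(\ln u - \ln v)$ is controlled by $|\ln u - \ln v| \le 2.62$, and the two negative contributions $-(\xi_1'-4)^2/u$ and $-(\xi_1'-5)^2/v$ provide the dominant cancellation against $A_4$, since the first denominator satisfies $u \approx 2(0.046 + 2\zeta)$ and $(\xi_1'-4)^2 \ge (4 - 0.31359)^2 > 13.6$.

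The main obstacle is that the required upper bound on $A_5$ near $\zeta = 0$ is numerically tight relative to the coarse bounds on $\xi_1'$ from Claim~\ref{jprime}: allowing $\xi_1'$ to take its entire range $[-0.91445, 0.31359]$ weakens the cancellation from $-(\xi_1'-4)^2/u$. To circumvent this I would partition $[0, 0.046)$ into a small number of subintervals (for instance of width $0.01$), and on each subinterval use the fact that $\xi_1'$ is monotone decreasing (by Claim~\ref{ppj}) to obtain tighter two-sided bounds on $\xi_1'(\zeta)$, hence on $(\xi_1'-4)^2$ and $(\xi_1'-5)^2$. Since Claim~\ref{c34} also gives piecewise monotonicity of $A_4$, on each subinterval I would bound $A_4$ by its value at the appropriate endpoint and $A_5$ by the coarse interval estimates above, and verify $A_4 + A_5 \le 20$ piece by piece. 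Given that the true maximum of $A_4 + A_5$ sits comfortably below $20$, only a handful of subintervals should be needed to complete the verification.
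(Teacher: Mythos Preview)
Your approach is essentially the paper's own: set $z_1(\zeta)=0.092+4\zeta-\xi_1(\zeta)$ and $z_2(\zeta)=1.04-5\zeta+\xi_1(\zeta)$ (your $u,v$), observe that $z_1$ is increasing and $z_2$ decreasing, replace $\xi_1''$ by $-27.336$ in the logarithmic part, partition $[0,0.046)$, and on each piece bound every ingredient of $A_4+A_5$ by its value at the appropriate endpoint. The paper does exactly this, with $46$ subintervals of width $0.001$, and tabulates the resulting upper bounds.

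The one step that would fail is your claim that width $0.01$ --- or any ``handful'' of subintervals --- suffices. Near $\zeta=0$ the inequality is extremely tight: the paper's bound on $[0,0.001]$ is already $19.7$. The culprit is the term $-(\xi_1'(\zeta)-4)^2/z_1(\zeta)$, which must cancel almost all of $A_4(0)\approx 136$. Since $z_1$ rises from roughly $0.078$ at $\zeta=0$ to $0.116$ at $\zeta=0.01$, replacing $z_1$ by its value at the right endpoint of $[0,0.01]$ weakens this term from about $-167$ to about $-117$, and the resulting upper bound on $A_4+A_5$ on that subinterval is near $69$, not $20$. Even width $0.002$ gives roughly $27$ on the first piece. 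So while your observation that the \emph{true} value of $A_4(0)+A_5(0)$ sits well below $20$ is correct (it is about $11$), the interval estimates degrade much faster than you anticipate, and the paper's width $0.001$ is essentially forced.
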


\noindent  {\em Proof.} Let 
$$z_1(\zeta)=0.092 + 4\zeta - \xi_1(\zeta)\quad\mbox{and}\quad z_2(\zeta)=1.04 - 5\zeta + \xi_1(\zeta).$$
By Claim~\ref{jprime},  $z'_1(\zeta) = 4 - \xi_1'(\zeta) > 0$  and $z'_2(\zeta) = -5 + \xi_1'(\zeta) < 0$  
for each $\zeta\in [0,0.046)$. So,
\begin{equation}\label{ju3}
\mbox{\em  $z_1(\zeta)$ is increasing and $z_2(\zeta)$ is decreasing on $ [0,0.046)$.}
\end{equation}
 Since 
 $$ z_1(\zeta)<z_1(0.046)<z_2(0.046)<z_2(\zeta)$$
 for each $\zeta\in [0,0.046)$, Definitions~\eqref{A4} and~\eqref{A5} together with  Claim~\ref{ppj} yield
$$A_4(\zeta)+A_5(\zeta)=A_4(\zeta)+\xi_1''(\zeta) \cdot \ln(z_1(\zeta)) -   \frac{(\xi_1'(\zeta)-4)^2}{z_1(\zeta)}
 -\xi_1''(\zeta) \cdot \ln(z_2(\zeta))-  \frac{(\xi_1'(\zeta)-5)^2}{z_2(\zeta)} $$
$$\le A_4(\zeta)-27.336 \cdot (\ln(z_1(\zeta)) - \ln(z_2(\zeta))) - \frac{ (\xi_1'(\zeta)-4)^2}{z_1(\zeta)}- \frac{ (\xi_1'(\zeta)-5)^2}{z_2(\zeta)}=: Q(\zeta).$$

Since  $\zeta\in [0,0.046)$, it belongs to the interval $[0.001k,0.001(k+1))$ for some integer $0\leq k\leq 45$.
We consider $3$ cases.

\smallskip

{\em Case 1:}  $0\leq k \leq 34$.  Then by Claim \ref{c34} and~\eqref{ju3}, for each
$\zeta\in [0.001k,0.001(k+1))$,
$$A_4(0.001k) \ge A_4(\zeta),$$
$$z_1(\zeta) \ge z_1(0.001k)\text{, and }z_2(0.001k) \ge z_2(\zeta).$$
Therefore,
$$Q(\zeta) \le  M_1(k):=A_4( 0.001k) -27.336 \cdot (\ln(z_1(0.001k)) -  \ln(z_2(0.001k)))$$
$$ - \frac{(\xi_1'( 0.001k)-4)^2}{z_1(0.001(k+1))}  - \frac{(\xi_1'(  0.001k)-5)^2}{z_2(0.001k)}.$$
The bounds for $M_1(k)$ certifying that $M_1(k)<20$ for each $0\leq k \leq 34$ are given in Table 2 in
Appendix~1.

{\em Case 2:} $k=35$.  Similarly to Case 1,
$$Q(\zeta) \le  \max(A_4(0.035),A_4(0.036)) -27.336 \cdot (\ln(z_1(0.035))
 -  \ln(z_2(0.035)))$$
$$ - \frac{(\xi_1'(0.035)-4)^2}{z_1(0.036)} 
 - \frac{(\xi_1'(0.035)-5)^2}{z_2(0.035)} $$
$$<98.404 - 27.336 \cdot (-1.5-(-0.135)) -94 -36.3< 5.5< 20.$$

{\em Case 3:}  $36\leq k \leq 45$. Again, similarly to Case 1,
$$Q(\zeta) \le  M_3(k):=A_4( 0.001(k+1)) -27.336 \cdot (\ln(z_1(0.001k)) -  \ln(z_2(0.001k)))$$
$$ - \frac{(\xi_1'( 0.001k)-4)^2}{z_1(0.001(k+1))}  - \frac{(\xi_1'(  0.001k)-5)^2}{z_2(0.001k)}.$$
The bounds for $M_1(k)$ certifying that $M_1(k)<20$ for each $36\leq k \leq 45$ are given in Table 1 in
Appendix~1.
\qed

Thus by~\eqref{A1-5} and Claims~\ref{c31}--\ref{c35}, for each  $\zeta\in [0,0.046)$,
  $$\frac{\partial^2 \ln(h_1(0.454,\zeta))}{\partial \zeta^2} = \sum_{i=1}^5A_i(\zeta)  < -22.46 + 20 = -2.46 < 0.$$
We also can check by plugging in the values that
 $$\frac{\partial \ln(h_1(0.454,0.0228718))}{\partial \zeta} > 7.54 \cdot 10^{-8} \text{, and } \frac{\partial \ln(h_1(0.454,0.0228719))}{\partial \zeta} < -9 \cdot 10^{-6}.$$
Thus, the derivative of $h_1(0.454,\zeta)$ equals $0$ at a unique 
 $ \zeta_1\in( 0.0228718 , 0.0228719)$.

Recall that $h_1(0.454,\zeta)>0$ for  $\zeta\in [0,0.046)$. So, after comparing the value 
$h_1(0.454,0.0228719)$
 with the boundary values $h_1(0.454,0)$ and $h_1(0.454,0.46) $, we conclude that the maximum of
 $h_1(0.454,\zeta)$ is attained at $\zeta_1$.
 We can plug in numbers into a computer and obtain that $$h_1(0.454, 0.0228718) \le 0.999982,$$ 
 $$\frac{\partial \ln(h_1(0.454, 0.0228718))}{\partial \zeta} \le 1 \cdot 10^{-7},$$ 
and
$$\frac{\partial h_1(0.454, 0.0228718)}{\partial \zeta} = h_1(0.454,0.0228718) \cdot \frac{\partial \ln(h_1(0.454, 0.0228718))}{\partial \zeta}\le 1 \cdot 10^{-7},$$
which implies that $$h_1(0.454, \zeta_1) \le h_1(0.454, 0.0228718)+ 1 \cdot 10^{-7} \cdot 0.0000001 \le 0.999983. 
\qed$$


The proof of the next lemma is similar but significantly simpler. It is mostly a routine bounding some expressions. So, we present the proof
of Lemma~\ref{ph} in Appendix~2.

\begin{lemma}\label{ph} For every 
$$(\chi,\zeta,\xi) \in \Omega=\{(\chi,\zeta,\xi): 0.454<\chi \leq 0.45537,\; 0\leq \zeta\leq  \frac{1}{2} - \chi,\; 0\leq \xi\leq 1-2\chi-2\zeta\},$$
we have
\begin{equation}\label{ju31}
\frac{\partial \ln(h(\chi,\zeta,\xi))}{\partial \chi} < 0.\qed
\end{equation}
\end{lemma}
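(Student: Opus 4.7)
The plan is to compute $\partial \ln h/\partial \chi$ explicitly, reduce to the case $\xi=0$ via a monotonicity argument, and then bound the resulting two-variable function.

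First, I would differentiate $\ln h = \ln f + \ln g$ with respect to $\chi$ using the identity $\partial_\chi(u\ln u) = u'(\chi)(\ln u + 1)$ applied term by term. The result is a sum of logarithms. The only terms involving $\xi$ come from $\ln g$, namely
$$2\ln(1 - 2\chi - 2\zeta - \xi) + 2\ln(1 - 2\chi + 4\zeta - \xi) - 10\ln(10\chi - \tfrac{7}{2} - 5\zeta + \xi),$$
while all remaining terms depend only on $(\chi,\zeta)$.

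Second, I would show that $\partial \ln h/\partial \chi$ is strictly decreasing in $\xi$ on $\Omega$, since
$$\frac{\partial^2 \ln h}{\partial \chi\, \partial \xi} = -\frac{2}{1 - 2\chi - 2\zeta - \xi} - \frac{2}{1 - 2\chi + 4\zeta - \xi} - \frac{10}{10\chi - \tfrac{7}{2} - 5\zeta + \xi} < 0,$$
each denominator being strictly positive on $\Omega$ (for the last one, $10\chi - \tfrac{7}{2} - 5\zeta \ge 10(0.454) - \tfrac{7}{2} - 5(\tfrac{1}{2}-\chi) > 0$ on our range). Hence the supremum over $\xi$ is attained at $\xi = 0$, and it suffices to prove $P_0(\chi,\zeta) := \partial \ln h/\partial \chi\bigm|_{\xi = 0} < 0$.

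Third, at $\xi = 0$ the pairs $\pm 2\ln(1-2\chi-2\zeta)$ and $\pm 2\ln(1-2\chi+4\zeta)$ cancel (as do the constants $+6$ from $\partial \ln f/\partial \chi$ and $-6$ from $\partial \ln g/\partial \chi$), leaving
$$P_0(\chi,\zeta) = -4\ln 3 + 2\ln 2 + 6\ln(6\chi - \tfrac{3}{2} + 3\zeta) + 6\ln(6\chi - \tfrac{3}{2} - 9\zeta) - 2\ln(2\chi + \zeta - \tfrac{1}{2})$$
$$+ \ln(\tfrac{1}{2} - \chi - \zeta) + \ln(\tfrac{1}{2} - \chi + 2\zeta) - 2\ln(2\chi - 3\zeta - \tfrac{1}{2}) - 10\ln(10\chi - \tfrac{7}{2} - 5\zeta).$$
I would then verify $P_0<0$ on $0.454 < \chi \le 0.45537$, $0 \le \zeta \le \tfrac{1}{2}-\chi$ by first checking term by term that $\partial P_0/\partial \chi < 0$ on the region (so that it suffices to consider $\chi \to 0.454^+$), and then analyzing the one-variable function $P_0(0.454,\zeta)$ on $[0, 0.046]$: its critical point can be located by setting $\partial P_0/\partial \zeta = 0$, and the value of $P_0$ there together with the boundary values $P_0(0.454,0)$ and $\lim_{\zeta \to 0.046^-} P_0(0.454,\zeta) = -\infty$ give the bound.

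The main obstacle is this final bounding step, but it is much less delicate than the analogous bound in Lemma~\ref{max}: numerical sampling suggests $P_0 \le -2.9$ throughout the region, so rather crude estimates---e.g.\ combining monotonicity with a short finite grid together with explicit Lipschitz bounds on $\partial P_0/\partial \zeta$---suffice, rather than the tight analysis needed to establish the $0.999983$ bound. This is why, although the proof is longer than the statement of the lemma, it is essentially routine and can be deferred to Appendix~2.
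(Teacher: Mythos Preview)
Your proposal is correct, but the paper's proof in Appendix~2 takes a different and somewhat shorter route. Rather than reducing dimensions one at a time (first $\xi\to 0$ via the mixed-partial sign, then $\chi\to 0.454^+$ via $\partial P_0/\partial\chi<0$, then a one-variable critical-point analysis in $\zeta$), the paper simply writes $\partial\ln h/\partial\chi$ as a sum of six elementary pieces $B_1,\dots,B_6$ (each a single logarithm or a difference of two), observes that each $B_i$ is monotone in every coordinate on $\Omega$, and therefore attains its maximum at a corner of $\Omega$ or $\Omega_1$. Six corner evaluations give $B_1<-3.55$, $B_2<-3.14$, $B_3<0$, $B_4<-1.28$, $B_5<-2.57$, $B_6<0.14$, and summing finishes the proof.

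Your reduction $\partial^2\ln h/\partial\chi\,\partial\xi<0$ is a clean observation, but it does not actually save work: after setting $\xi=0$ you still face the two-dimensional inequality $\partial P_0/\partial\chi<0$, whose ``term by term'' verification is essentially the same corner-checking the paper does directly on $\partial\ln h/\partial\chi$, and on top of that you add a one-variable critical-point search in $\zeta$. The paper's approach avoids any second derivatives or critical-point location entirely; your approach is more systematic but strictly longer. Either way the margin is comfortable (you note $P_0\lesssim -2.9$; the paper gets a sum $<-10$), so there is no delicacy here comparable to Lemma~\ref{max}.
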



Since $h(\chi,\zeta,\xi) > 0$ for each  $(\chi,\zeta,\xi) \in \Omega,$
 Lemma~\ref{ph} yields that for each fixed $\zeta$ and $\xi$,  the maximum of $h(\chi,\zeta,\xi)$ over
 $(\chi,\zeta,\xi) \in \Omega$
 is attained at $\chi=0.454.$  By Lemma~\ref{max},
 this maximum 
  is at most 0.999983.  This yields~\eqref{czx}.
  
\section{Completion of the proof of Theorem~\ref{solve}}
  By~\eqref{czx} and  Lemma~\ref{cl11}, for  all positive integers $n$ and $x$ such that $n$ is even and $0.454n<x\leq 0.45537n$,
 $$
\frac{q(x,n)}{(3n-1)!!} \leq O(n^{6}) \cdot 0.999983^n.
$$
It follows that
   \begin{equation}\label{rlast}
\frac{1}{(3n-1)!!}\sum_{x=\lceil 0.454n\rceil}^{\lfloor 0.45537n\rfloor}q(x,n) \leq O(n^{7}) \cdot 0.999983^n
\to 0\quad \mbox{as $n\to\infty$.}
\end{equation}
Thus by Lemma~\ref{formula},
 the number of pairings $F \in \mathcal{G}'_{16}(n)$ with
 $0.454n< \alpha(F)\leq 0.45537n$ is  $o\left((3n-1)!!\right)$.
Together with Theorem~\ref{MK1}, this means that almost no pairings have independence ratio larger than 0.454. Thus by Corollary~\ref{MK2} we conclude 
that almost no $n$-vertex $3$-regular graphs of girth at least $16$ have independence ratio larger than $0.454$.
This proves Theorem~\ref{solve} and thus also Theorem~\ref{main theorem}.






\bigskip\noindent
{\bf Acknowledgment.} We thank Jan Volec for helpful discussion and bringing~\cite{C1} to our attention. We thank a referee for the valuable comments.

\newpage

{\bf \Large Appendix 1: Tables for Claim~\ref{c35}}
\vspace{1cm}

\begin{tabular}{c|c|c|c|c|c|c|c}
\hline
k & $A_4(0.001k)$ & $- \ln(z_1(0.001k))$  & $ \ln(z_2(0.001k))$   & $ - \frac{(\xi_1'( 0.001k)-4)^2}{z_1(0.001(k+1))}  $  &  $ - \frac{(\xi_1'(  0.001k)-5)^2}{z_2(0.001k)} $  & $M_1(k)$     \\ \hline
 0 &   135.9762 & 2.553562 & 0.05277836& -166.7356 & -20.83335& 19.7\\ \hline
 1 &  132.6679  &  2.507105&  0.04831009 & -161.790& -21.1686&19.6  \\ \hline
 2&   129.6543 & 2.462392 &  0.04379588   & -157.2333& -21.50947&19.5 \\ \hline
 3 &   126.903 &  2.419288& 0.03923514  & -153.0194& -21.85574&19.3\\ \hline
 4&  124.384  &  2.377674 &  0.03462729 & -149.1122&  -22.20758&19.1\\ \hline
 5 &   122.0728  &  2.33745&   0.02997174 &-145.4794 &-22.56505 &18.8\\ \hline
 6 &   119.9504  &  2.298492& 0.02526786 & -142.0935& -22.92824&18.5\\ \hline
 7 &   117.9977 &  2.260743  &  0.02051507& -138.9302& -23.29722&18.2\\ \hline
 8 &    116.1989 &   2.224115 & 0.01571273 &  -135.9683& -23.67205&17.8 \\ \hline
9  &   114.5404 &  2.188538  &  0.01086022& -133.1892& -24.05282&17.5\\ \hline
 10 &   113.0099& 2.153948 & 0.005956888 & -130.5765& -24.43961&17.1\\ \hline
 11 &    111.5969&  2.120286  &  0.001002109&-128.1157 & -24.83248&16.7\\ \hline
 12 &   110.292 &   2.0876 & -0.004004782&-125.793 & -25.23152&16.3 \\ \hline
13  &   109.0867 &  2.055542  &  -0.009064451&-123.5994 & -25.63682&15.8\\ \hline
 14 &    107.9738&   2.024367  & -0.01417756& -121.5220& -26.04845&15.4\\ \hline
 15 &    106.9466&  1.993934  &  -0.01934483& -119.5525& -26.4664& 15.0\\ \hline
 16 &   106.000 &    1.964204 & -0.02456692& -117.6823&  -26.89104&14.5\\ \hline 
 17 &  105.1262  &  1.935144 &  -0.02984456& -115.9041& -27.32218&14.0\\ \hline
18  &    104.3229& 1.90673   & -0.03517846& -114.2111& -27.76000&13.6\\ \hline
  19&    103.585 &  1.878905  & -0.04056935& -112.5970& -28.20460& 13.1\\ \hline
20  &   102.9088 &  1.851668  & -0.04601797 & -111.0562& -28.65606&12.6\\ \hline
21 &    102.2906 &  1.824985  & -0.05152507 & -109.5836& -29.1144&12.1\\ \hline
  22&   101.7273 &  1.798832  &-0.05709143 & -108.1746& -29.57998&11.6\\ \hline
 23 &   101.217 & 1.773185   & -0.06271781& -106.8248& -30.05263&11.1\\ \hline
 24 &    100.7543&  1.748024   &  -0.06840502 & -105.5304&-30.53255 &10.7\\ \hline
 25 &    100.3398 & 1.723329  & -0.07415386 & -104.2877 & -31.01984&10.2\\ \hline
 26 &    99.97009 & 1.699082  &  -0.07996514& -103.0934&-31.51462 & 9.7\\ \hline
27  &   99.64358 &  1.675265  &  -0.08583972& -101.9446& -32.016 &9.2\\ \hline
28  &   99.3585 & 1.651862  &  -0.09177843& -100.8383& -32.52708&8.7\\ \hline
 29 &  99.11297  &  1.628858 &-0.09778215 & -99.77206&-33.04501 &8.2\\ \hline
 30 &  98.90584  & 1.606239  &-0.1038517 &  -98.74333& -33.5708&7.7\\ \hline
 31 &   98.7358 & 1.583989  & -0.1099881& -97.74996& -34.10486 &7.2\\ \hline
32  &  98.6015  & 1.562098  & -0.1161922& -96.78986&-34.64704 & 6.7\\ \hline 
 33 &    98.50187& 1.54056  & -0.122464& -95.86113& -35.19758&6.3\\ \hline
  34&    98.43615 &  1.519339  & -0.1288073&  -94.96198 & -35.75661&5.8\\ \hline
\end{tabular}

\begin{center}
 {Table 1: Upper bounds for expressions in $M_1(k)$.}
\end{center}


\begin{tabular}{c|c|c|c|c|c|c|c}
\hline
k & $A_4(0.001(k+1))$ & $- \ln(z_1(0.001k))$  & $ \ln(z_2(0.001k))$   & $ - \frac{(\xi_1'( 0.001k)-4)^2}{z_1(0.001(k+1))}  $  &  $ - \frac{(\xi_1'(  0.001k)-5)^2}{z_2(0.001k)} $  & $M_3(k)$     \\ \hline

 36 &  98.43379  &  1.477873 & -0.1417047& -93.24588 & -36.90074&4.9\\ \hline
 37 &    98.49569 & 1.457599  & -0.1482617 & -92.42593& -37.48615&4.4\\ \hline 
 38 &  98.58802  & 1.437619  &-0.1548924 & -91.62957& -38.08066&4.0\\ \hline
  39&   98.71033 &  1.417923 & -0.1615978 & -90.85551 &-38.68445 &3.6\\ \hline
 40 &  98.86225  & 1.398503  &-0.1683790 &-90.1025 & -39.29768&3.1\\ \hline
 41 &   99.04347 &  1.379352 & -0.1752370& -89.36971& -39.92054&2.7\\ \hline
  42&   99.25376 & 1.36046  &  -0.1821731& -88.65582& -40.55321&2.3\\ \hline
 43 &    99.49293&  1.341822 & -0.1891883&-87.95995 & -41.1958&1.9\\ \hline
 44 &     99.76085&  1.323429 &  -0.1962838& -87.28121&  -41.84877&1.5\\ \hline
 45 &   100.0576 &  1.305276 & -0.2034610&-86.61873 & -42.51206&1.1\\ \hline
\end{tabular}
\begin{center}
 {Table 2: Upper bounds for expressions in $M_3(k)$.}
\end{center}

\noindent

\vspace{1cm}
\noindent
{\bf \Large Appendix 2: Proof of Lemma~\ref{ph}}\\

By definition, the boundary,  $\partial \Omega$,   of $\Omega$ is
$$\partial \Omega = \{(\chi,\zeta,\xi) \, :\; \xi=0, 2\chi+2\zeta \le 1, 0.454 \le \chi \le 0.45537, \zeta \ge 0\} \cup $$
$$  \{(\chi,\zeta,\xi) \, :\; \zeta=0, 2\chi+\xi \le 1, 0.454 \le \chi \le 0.45537, \xi \ge 0\}\cup$$
$$  \{(\chi,\zeta,\xi) \, :\; \chi=0.454, 2\zeta+\xi \le 0.092,  \zeta \ge 0,\xi \ge 0\} \cup $$
$$   \{(\chi,\zeta,\xi) \, :\; \chi=0.45537, 2\zeta+\xi \le 0.08926,  \zeta \ge 0,\xi \ge 0\}.$$
We also will consider the $2$-dimensional set
 $$\Omega_1= \{(\chi,\zeta) \, :\; 0.454 \le \chi \le 0.45537, 0 \le \zeta \le 0.5-\chi\}.$$
Then the boundary of $\Omega_1$ is
$$\partial \Omega_1 = \{(\chi,\zeta)\, :\; 0.454 \le \chi \le 0.45537, \zeta = 0\} \cup \{(\chi,\zeta)\, :\; 0\le \zeta \le 0.046, \chi = 0.454\} $$
$$\cup \{(\chi,\zeta)\, :\; 0\le \zeta \le 0.04463, \chi = 0.45537\}
 \cup \{(\chi,\zeta)\, :\; 0.454 \le \chi \le 0.45537, \chi+\zeta = \frac{1}{2}\}.$$

By the definition of $h$,
$$\frac{\partial \ln(h(\chi,\zeta,\xi))}{\partial \chi} = 4\ln(2\chi-3\zeta-\frac{1}{2}) + 4\ln(2\chi+\zeta-\frac{1}{2}) - \ln(1-2\chi-2\zeta)$$
$$ - \ln(1-2\chi+4\zeta) + 2\ln(1-2\chi-2\zeta-\xi) + 2\ln(1-2\chi+4\zeta-\xi) - 10\ln(10\chi-5\zeta+\xi-\frac{7}{2}).$$

Similarly to the proof of Lemma~\ref{max}, we present $\frac{\partial \ln(h(\chi,\zeta,\xi))}{\partial \chi}$ 
in the form $\sum_{j=1}^6 B_j,$ where
\begin{equation}\label{B12}
B_1(\chi,\zeta):=4\ln(2\chi-3\zeta-\frac{1}{2}),\quad B_2(\chi,\zeta):=4\ln(2\chi+\zeta-\frac{1}{2}),
\end{equation}
\begin{equation}\label{B34}
B_3(\chi,\zeta,\xi):=2\ln(1-2\chi-2\zeta-\xi)-\ln(1-2\chi-2\zeta), \quad B_4(\chi,\zeta):=\ln(1-2\chi+4\zeta),
\end{equation}
\begin{equation}\label{B56}
B_5(\chi,\zeta,\xi):=2\ln(1-2\chi+4\zeta-\xi), \;\mbox{ and }\; B_6(\chi,\zeta,\xi):=\ln(10\chi-5\zeta+\xi-\frac{7}{2}),
\end{equation}
and then bound each of the terms separately.

\begin{claim}\label{c41} For all $(\chi,\zeta) \in \Omega_1$, $\;B_1(\chi,\zeta) < -3.55.$
\end{claim}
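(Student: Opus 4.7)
The statement bounds $B_1(\chi,\zeta) = 4\ln(2\chi - 3\zeta - \tfrac{1}{2})$ by $-3.55$ over the two-dimensional region $\Omega_1 = \{(\chi,\zeta) : 0.454 \le \chi \le 0.45537,\ 0 \le \zeta \le \tfrac{1}{2}-\chi\}$. The plan is to exploit the extremely simple structure of $B_1$: it depends on $(\chi,\zeta)$ only through the linear quantity $L(\chi,\zeta) := 2\chi - 3\zeta - \tfrac{1}{2}$. On the portion of $\Omega_1$ where $L > 0$, $B_1$ is strictly increasing in $L$, and hence strictly increasing in $\chi$ and strictly decreasing in $\zeta$. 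On the portion where $L \le 0$, using the convention $\ln 0 = -\infty$ declared at the start of Section~4 (and noting that this regime corresponds to the infeasible combinatorial case $|B-Z| < 0$ in the original pairing model), $B_1 = -\infty$, so the desired inequality $B_1 < -3.55$ holds vacuously.

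Thus the only real work is to upper-bound $L$ on $\Omega_1$. Since $L$ is linear with a positive coefficient on $\chi$ and a negative coefficient on $\zeta$, its maximum over the rectangle-like region $\Omega_1$ is attained at the corner maximizing $\chi$ and minimizing $\zeta$, namely $(\chi,\zeta) = (0.45537, 0)$, giving
\[
L(0.45537, 0) \;=\; 2(0.45537) - \tfrac{1}{2} \;=\; 0.41074.
\]
The final step is the numerical verification that $4\ln(0.41074) < -3.55$, equivalently $0.41074 < e^{-0.8875}$. A routine estimate (e.g.\ $e^{-0.8875} > 0.4117$) confirms this with a comfortable margin, so the bound $B_1 < -3.55$ actually holds with slack of roughly $0.01$. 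I do not anticipate any genuine obstacle here: the claim reduces to a one-line monotonicity argument (reduction to a single corner of the linear form $L$) followed by a single logarithm evaluation.
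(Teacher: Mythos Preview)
Your proof is correct and follows essentially the same approach as the paper: reduce to the corner $(\chi,\zeta)=(0.45537,0)$ by monotonicity and then verify $4\ln(0.41074)<-3.55$. One minor remark: the paper first checks that $2\chi-3\zeta-\tfrac12>0$ on all of $\Omega_1$ (since $\chi\ge 0.454$ and $\zeta\le 0.046$ give $2\chi-3\zeta-\tfrac12\ge 0.27$), so your case $L\le 0$ is in fact empty and need not be discussed.
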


\noindent  {\em Proof.}
For each  $(\chi,\zeta) \in \Omega_1,$ we have $\chi-\frac{3}{2}\zeta-0.25 > 0,$ since $\chi \ge 0.454$ and $\zeta \le 0.046$.
As for each  $(\chi,\zeta) \in \Omega_1,$
$$\frac{\partial B_1(\chi,\zeta)}{\partial \chi} = \frac{4}{\chi - \frac{3}{2}\zeta- 0.25} > 0\qquad
\mbox{\em and }\qquad
\frac{\partial B_1(\chi,\zeta)}{\partial \zeta} = \frac{-6}{\chi - \frac{3}{2}\zeta - 0.25} < 0,$$
the maximum is attained at a corner on the boundary $\partial \Omega_1$. Comparing the values
of $B_1$  at the four corners of $\partial \Omega_1$, we see that the maximum is attained
 at $(\chi,\zeta) = (0.45537,0)$ and $B_1(0.45537,0) < -3.55.$\qed


\begin{claim}\label{c42} For all $(\chi,\zeta) \in \Omega_1$, $\;B_2(\chi,\zeta) < -3.14.$
\end{claim}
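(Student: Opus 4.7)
\medskip\noindent\textbf{Proof proposal for Claim~\ref{c42}.}
The plan is the same two-step routine as for Claim~\ref{c41}: show that $B_2(\chi,\zeta)=4\ln(2\chi+\zeta-\tfrac12)$ is monotone in each variable on $\Omega_1$, then evaluate at the extreme corner. First observe that the argument $2\chi+\zeta-\tfrac12$ of the logarithm is strictly positive on $\Omega_1$, since $\chi\ge 0.454$ and $\zeta\ge 0$ give $2\chi+\zeta-\tfrac12\ge 0.408>0$. Hence $B_2$ is smooth on $\Omega_1$ and
\[
\frac{\partial B_2}{\partial \chi}=\frac{8}{2\chi+\zeta-\tfrac12}>0,\qquad \frac{\partial B_2}{\partial \zeta}=\frac{4}{2\chi+\zeta-\tfrac12}>0.
\]

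Since both partial derivatives are positive, $B_2$ is strictly increasing in each of $\chi$ and $\zeta$ on $\Omega_1$, so its maximum is attained at the corner of $\Omega_1$ where $\chi$ and $\zeta$ are simultaneously as large as possible. The feasible region is $\{0.454\le \chi\le 0.45537,\ 0\le \zeta\le \tfrac12-\chi\}$, so this corner is $(\chi,\zeta)=(0.45537,\,0.04463)$, lying on the slanted boundary $\chi+\zeta=\tfrac12$. Equivalently, one can maximize $2\chi+\zeta$ directly: writing $2\chi+\zeta=\chi+(\chi+\zeta)$ and using $\chi\le 0.45537$ together with $\chi+\zeta\le \tfrac12$ gives
\[
2\chi+\zeta-\tfrac12 \ \le\ 0.45537+\tfrac12-\tfrac12 \ =\ 0.45537.
\]

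Therefore $B_2(\chi,\zeta)\le 4\ln(0.45537)$ on $\Omega_1$. A direct numerical estimate, for instance via $\ln(0.45537)=\ln(1/2)+\ln(0.91074)$ and a few terms of the expansion $\ln(1-t)=-t-\tfrac{t^2}{2}-\tfrac{t^3}{3}-\cdots$ with $t=0.08926$, yields $\ln(0.45537)<-0.7866$, so $4\ln(0.45537)<-3.146<-3.14$, which gives the claim. The only ``obstacle'' is the numerical verification of this single logarithm, which is routine; the structural content is just the monotonicity of $B_2$ and the shape of $\Omega_1$. \qed
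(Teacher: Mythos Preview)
Your proof is correct and follows essentially the same approach as the paper: both compute the two positive partial derivatives to conclude the maximum lies at a corner, identify $(\chi,\zeta)=(0.45537,0.04463)$ as the maximizing corner, and evaluate $B_2$ there. Your additional linear observation $2\chi+\zeta=\chi+(\chi+\zeta)\le 0.45537+\tfrac12$ is a nice way to pin down the maximizing corner without checking all four, and your explicit numerical estimate of $4\ln(0.45537)$ is a harmless elaboration of the paper's bare assertion that $B_2(0.45537,0.04463)<-3.14$.
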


\noindent  {\em Proof.}
For each  $(\chi,\zeta) \in \Omega_1,$ we have $2\chi + \zeta - \frac{1}{2} > 0$.
As for each  $(\chi,\zeta) \in \Omega_1,$
$$\frac{\partial B_2(\chi,\zeta)}{\partial \chi} = \frac{8}{2\chi + \zeta - \frac{1}{2}} > 0,
\qquad
\mbox{\em and }\qquad
\frac{\partial B_2(\chi,\zeta)}{\partial \zeta} = \frac{4}{2\chi + \zeta - 0.5} > 0,$$
the maximum is attained at a corner of the boundary $\partial \Omega_1.$
Comparing the values
of $B_2$  at the four corners of $\partial \Omega_1$, we see that 
the maximum is attained at $(\chi,\zeta) = (0.45537,0.04463)$, and $B_2(0.45537,0.04463) < -3.14.$\qed

\begin{claim}\label{c43} For all $(\chi,\zeta,\xi) \in \Omega$, $\;B_3(\chi,\zeta,\xi) < 0.$
\end{claim}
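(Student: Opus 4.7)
The plan is to reduce the claim to a single elementary monotonicity fact about the logarithm. Introduce the shorthand $u := 1 - 2\chi - 2\zeta$ and $w := u - \xi = 1 - 2\chi - 2\zeta - \xi$, so that by the definition~\eqref{B34},
\[
B_3(\chi,\zeta,\xi) = 2\ln w - \ln u.
\]
The constraints $(\chi,\zeta,\xi)\in\Omega$ translate to $0 \le w \le u$, and since $\chi > 0.454$, we have the key bound
\[
u = 1 - 2\chi - 2\zeta \;\le\; 1 - 2\chi \;<\; 1 - 0.908 = 0.092 \;<\; 1.
\]

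First I would handle the generic case $u > 0$. Since $0 \le w \le u < 1$, we get $w^2 \le u\cdot w \le u \cdot u < u \cdot 1 = u$, i.e., $w^2 < u$ strictly. Taking logarithms (with the paper's convention $\ln 0 = -\infty$ in case $w = 0$) gives $2\ln w < \ln u$, which is precisely $B_3 < 0$. For the degenerate case $u = 0$ (which forces $\xi = 0$ via $\xi \le u$), the expression is formally $2(-\infty) - (-\infty)$; parametrizing $\xi = \alpha u$ with $\alpha \in [0,1]$ and letting $u \to 0^+$ yields $B_3 = 2\ln(1-\alpha) + \ln u \to -\infty$, so by continuous extension $B_3 = -\infty < 0$ at the boundary $\chi + \zeta = \tfrac{1}{2}$ as well. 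No obstacle is expected here; the claim is just the observation that squaring a number in $[0,1)$ makes it smaller, applied to $w/u \le 1$ and $u < 1$ simultaneously.
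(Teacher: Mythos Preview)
Your proof is correct and is essentially the same as the paper's. Writing $w=1-2\chi-2\zeta-\xi$ and $u=1-2\chi-2\zeta$, the paper decomposes $B_3=\ln w+\ln(w/u)$ and notes that $\ln w<0$ (since $w<1$) and $\ln(w/u)\le 0$ (since $w\le u$); your inequality $w^2\le uw\le u^2<u$ is the multiplicative restatement of exactly these two facts, and your treatment of the degenerate boundary $u=0$ is a bit more careful than the paper's.
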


\noindent  {\em Proof.} We can write $B_3(\chi,\zeta,\xi)$ in the form
$$B_3(\chi,\zeta,\xi) = \ln(1-2\chi-2\zeta-\xi)+\ln\left(\frac{1-2\chi-2\zeta-\xi}{1-2\chi-2\zeta}\right),$$
and observe that $\ln(1-2\chi-2\zeta-\xi) < 0$ (since $2\chi + 2\zeta + \xi > 0$)
 and $\ln(\frac{1-2\chi-2\zeta-\xi}{1-2\chi-2\zeta}) \le 0$ (since   $1-2\chi-2\zeta-\xi \le 1-2\chi-2\zeta$)  .\qed

\begin{claim}\label{c44} For all $(\chi,\zeta) \in \Omega_1$, $\;B_4(\chi,\zeta) <-1.28.$
\end{claim}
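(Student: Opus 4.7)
\noindent \emph{Proof proposal.} The function $B_4(\chi,\zeta)=\ln(1-2\chi+4\zeta)$ depends on a single linear combination of $\chi$ and $\zeta$, and on $\Omega_1$ the argument $1-2\chi+4\zeta$ stays strictly positive (it is at least $1-2(0.45537)+0=0.08926$). Hence $B_4$ is smooth on $\Omega_1$, and
$$\frac{\partial B_4}{\partial \chi}=\frac{-2}{1-2\chi+4\zeta}<0,\qquad \frac{\partial B_4}{\partial \zeta}=\frac{4}{1-2\chi+4\zeta}>0.$$
So $B_4$ is strictly decreasing in $\chi$ and strictly increasing in $\zeta$ throughout $\Omega_1$.

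This monotonicity forces the maximum to be attained at the corner of $\Omega_1$ with the smallest $\chi$ and the largest $\zeta$, namely $(\chi,\zeta)=(0.454,0.046)$ (this is indeed a vertex of $\partial\Omega_1$ since $0.454+0.046=\tfrac{1}{2}$). The approach is thus the same one used in Claims~\ref{c41} and~\ref{c42}: rule out interior extrema by the sign of the partial derivatives, then compare at the four corners, and observe that the extremal corner is $(0.454,0.046)$.

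It then remains to evaluate $B_4$ at that corner:
$$B_4(0.454,0.046)=\ln\bigl(1-0.908+0.184\bigr)=\ln(0.276).$$
Since $e^{-1.28}>0.278>0.276$, we obtain $\ln(0.276)<-1.28$, which yields the claimed bound. No obstacle is anticipated here; the only ``hard'' step is the numerical verification that $\ln(0.276)<-1.28$, and this is a safe margin (about $0.007$) easily confirmed by a single logarithm estimate.
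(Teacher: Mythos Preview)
Your proof is correct and follows essentially the same approach as the paper: verify positivity of the argument, compute the signs of the two partial derivatives to reduce to the corners of $\partial\Omega_1$, identify $(0.454,0.046)$ as the maximizing corner, and check the numerical bound there. Your write-up only adds the explicit verification that $\ln(0.276)<-1.28$ via $e^{-1.28}>0.278$, which the paper leaves implicit.
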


\noindent  {\em Proof.} For each  $(\chi,\zeta) \in \Omega_1,$
 $-2\chi+4\zeta+1 > 0$. 
As for each  $(\chi,\zeta) \in \Omega_1,$
$$\frac{\partial B_4(\chi,\zeta)}{\partial \chi} = \frac{-2}{-2\chi + 4\zeta + 1} < 0,
\qquad
\mbox{\em and }\qquad
\frac{\partial B_4(\chi,\zeta)}{\partial \zeta} = \frac{4}{-2\chi + 4\zeta + 1} > 0,$$
the maximum of $B_4$ is attained at a corner of the boundary $\partial \Omega_1.$ 
Comparing the values
of $B_4$  at the four corners of $\partial \Omega_1$, we see that 
 the maximum is attained 
at $(\chi,\zeta) = (0.454,0.046)$, and $B_4(0.454,0.046) < -1.28.$\qed

\begin{claim}\label{c45} For all $(\chi,\zeta,\xi) \in \Omega$, $\;B_5(\chi,\zeta,\xi) < -2.57.$
\end{claim}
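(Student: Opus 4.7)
The plan is to exploit the fact that $B_5(\chi,\zeta,\xi) = 2\ln(1-2\chi+4\zeta-\xi)$ depends on $(\chi,\zeta,\xi)$ only through the single linear quantity $u(\chi,\zeta,\xi) := 1-2\chi+4\zeta-\xi$, and $\ln$ is strictly increasing on the positive reals. So the whole task reduces to maximizing the affine function $u$ over the polytope $\Omega$ and then applying one logarithm at the end.

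First I would record the partial derivatives $\partial u/\partial \chi = -2$, $\partial u/\partial \zeta = 4$, and $\partial u/\partial \xi = -1$. Consequently, to maximize $u$ over $\Omega$ we should take $\chi$ as small as possible, $\xi$ as small as possible, and $\zeta$ as large as possible. The constraints in $\Omega$ give $\chi \ge 0.454$, $\xi \ge 0$, and $\zeta \le \tfrac{1}{2}-\chi$, which at $\chi=0.454$ means $\zeta \le 0.046$. Taking $(\chi,\zeta,\xi) = (0.454,\,0.046,\,0)$ — which indeed lies in $\Omega$ since it satisfies $0 \le \xi \le 1-2\chi-2\zeta = 0$ vacuously — yields
\[
\max_{\Omega} u = 1 - 2(0.454) + 4(0.046) - 0 = 0.276.
\]

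Before applying the logarithm I would verify that $u \ge 0$ throughout $\Omega$, so that $\ln u$ is well defined (or diverges only to $-\infty$, which is harmless for an upper bound). The only place where $u$ could be small is at the other $\xi$-boundary $\xi = 1-2\chi-2\zeta$, where $u = 6\zeta \ge 0$; and since $u$ is linear, non-negativity on the boundary of a box implies non-negativity throughout. The only point where $u=0$ is the edge $\{\zeta=0,\ \xi=1-2\chi\}$, on which $B_5 = -\infty < -2.57$ trivially.

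Combining these observations, for every $(\chi,\zeta,\xi) \in \Omega$ we have
\[
B_5(\chi,\zeta,\xi) = 2\ln u(\chi,\zeta,\xi) \le 2\ln(0.276) < -2.57,
\]
the last inequality being the routine numerical check $\ln(0.276) < -1.285$, i.e.\ $0.276 < e^{-1.285} \approx 0.2767$. I do not expect any genuine obstacle here: the bound is tight only at the corner $(0.454,0.046,0)$, and the gap between $2\ln(0.276) \approx -2.5748$ and the target $-2.57$ is small but safely positive, so a direct numerical evaluation suffices.
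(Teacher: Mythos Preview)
Your proof is correct and follows essentially the same approach as the paper: both arguments observe that $B_5$ is monotone in each variable separately (you via the constant partials of the linear inner function $u=1-2\chi+4\zeta-\xi$, the paper via the partials of $B_5$ itself), so the maximum over the polytope $\Omega$ is attained at the corner $(0.454,\,0.046,\,0)$, where $B_5=2\ln(0.276)<-2.57$. Your reduction to maximizing the affine $u$ first is a slightly cleaner packaging of the same idea, and your explicit check that $u\ge 0$ on $\Omega$ (with $u=0$ only on the edge $\zeta=0,\ \xi=1-2\chi$) is a detail the paper handles implicitly.
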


\noindent  {\em Proof.} 
For each  $(\chi,\zeta,\xi) \in \Omega-\partial \Omega,$  we have $2\chi-4\zeta+\xi-1 < 0$ since $2\chi+2\zeta +\xi < 1 \le 1+ 4\zeta + 2\zeta.$
Since $$\lim\limits_{2\chi+\xi \to 1} B_5(\chi,0,\xi) = - \infty,$$ the maximum of $B_5$ is not  attained at $\zeta = 0,2\chi+\xi = 1.$
As for each  $(\chi,\zeta,\xi) \in \Omega,$
$$\frac{\partial B_5(\chi,\zeta,\xi)}{\partial \chi} = \frac{4}{2\chi-4\zeta+\xi-1} < 0,
\qquad\frac{\partial B_5(\chi,\zeta,\xi)}{\partial \zeta} = \frac{-8}{2\chi-4\zeta+\xi-1} > 0,$$
and $$
\frac{\partial B_5(\chi,\zeta,\xi)}{\partial \xi} = \frac{2}{2\chi-4\zeta+\xi-1} < 0,$$
the maximum of $B_5$ is attained at a corner of the boundary $\partial \Omega.$ 
Comparing the values
of $B_5$  at the  corners of $\partial \Omega$, we see that
 the maximum is attained at $(\chi,\zeta,\xi) = (0.454,0.046,0)$ and $B_5(0.454,0.046,0) < -2.57.$
\qed

\begin{claim}\label{c46} For all $(\chi,\zeta,\xi) \in \Omega$, $\;B_6(\chi,\zeta,\xi) <  0.14.$
\end{claim}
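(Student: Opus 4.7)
The plan is to reduce this three-variable bound to a one-line monotonicity argument, since the argument of the logarithm is affine. Write $B_6(\chi,\zeta,\xi) = \ln L(\chi,\zeta,\xi)$, where $L(\chi,\zeta,\xi) := 10\chi - 5\zeta + \xi - \frac{7}{2}$. The partial derivatives $\partial L/\partial\chi = 10 > 0$, $\partial L/\partial\zeta = -5 < 0$, and $\partial L/\partial\xi = 1 > 0$ show that $L$ is increasing in $\chi$ and $\xi$ and decreasing in $\zeta$. So the maximum of $L$ on the closure of $\Omega$ is attained at a point where $\chi$ and $\xi$ are as large as possible and $\zeta$ as small as possible.

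First I would use the defining constraint $\xi \le 1 - 2\chi - 2\zeta$ of $\Omega$ to eliminate $\xi$: substituting this upper bound gives
\[
L(\chi,\zeta,\xi) \le 10\chi - 5\zeta + (1-2\chi-2\zeta) - \tfrac{7}{2} = 8\chi - 7\zeta - \tfrac{5}{2}.
\]
The right-hand side is still increasing in $\chi$ and decreasing in $\zeta$, so over the remaining ranges $0.454 \le \chi \le 0.45537$ and $\zeta \ge 0$ it is maximized at $(\chi,\zeta) = (0.45537, 0)$, with value $8(0.45537) - \frac{5}{2} = 1.14296$.

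Then, since $\ln$ is monotone, I would conclude $B_6(\chi,\zeta,\xi) \le \ln(1.14296)$, and finally check the numerical bound $\ln(1.14296) < 0.14$ (equivalently $e^{0.14} \approx 1.1503 > 1.14296$). This yields the claim. There is no real obstacle here: the only nontrivial step is verifying the final numerical inequality, which is immediate.
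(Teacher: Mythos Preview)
Your proof is correct and follows essentially the same approach as the paper: both exploit that the argument $10\chi-5\zeta+\xi-\tfrac{7}{2}$ is monotone in each variable, so its maximum over $\Omega$ is attained at the vertex $(\chi,\zeta,\xi)=(0.45537,0,0.08926)$, giving $\ln(1.14296)<0.14$. Your variant, which first substitutes the constraint $\xi=1-2\chi-2\zeta$ and then maximizes $8\chi-7\zeta-\tfrac{5}{2}$, is a slightly cleaner way to reach the same corner and the same numerical value.
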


\noindent  {\em Proof.} 
For each  $(\chi,\zeta,\xi) \in \Omega,$  we have $10\chi - 5\zeta + \xi -\frac{7}{2} > 0$ since $10\chi-\frac{7}{2} \ge 1.04$ and $5\zeta \le 0.23.$
As for each  $(\chi,\zeta,\xi) \in \Omega,$
$$\frac{\partial B_6(\chi,\zeta,\xi)}{\partial \chi} = \frac{10}{10\chi - 5\zeta + \xi -\frac{7}{2}} > 0,\qquad
\frac{\partial B_6(\chi,\zeta,\xi)}{\partial \zeta} = \frac{-5}{10\chi - 5\zeta + \xi -\frac{7}{2}} < 0,$$
and
$$\frac{\partial B_6(\chi,\zeta,\xi)}{\partial \xi} = \frac{1}{10\chi - 5\zeta + \xi -\frac{7}{2}} > 0,$$
the maximum of $B_5$ is attained at a corner of the boundary $\partial \Omega.$ 
Comparing the values
of $B_6$  at the  corners of $\partial \Omega$, we see that
 the maximum is attained at $(\chi,\zeta,\xi) = (0.45537,0,0.08926)$ and $B_6(0.454,0,0.08926) < 0.14.$
\qed

By Claims~\ref{c41}--\ref{c46},  for each  $(\chi,\zeta,\xi) \in \Omega,$
$$\frac{\partial \ln(h(\chi,\zeta,\xi))}{\partial \chi} = B_1(\chi,\zeta) + B_2(\chi,\zeta) + B_3(\chi,\zeta,\xi) + B_4(\chi,\zeta) + B_5(\chi,\zeta,\xi) + B_6(\chi,\zeta,\xi)  $$
$$<-3.55-3.14+0-1.28-2.57+0.14  
 < 0.\qed $$




\end{document}